%
%
%
%
\documentclass[reqno]{amsart}
\usepackage{amssymb}
\usepackage{amsmath}
\usepackage{color}
\usepackage{hyperref}
\hypersetup{colorlinks=true,linkcolor=blue,citecolor=red}

\newtheorem{theorem}{Theorem}[section]
\newtheorem{lemma}[theorem]{Lemma}

\theoremstyle{definition}

\numberwithin{equation}{section}

\newcommand{ \mint }{ {\int\hspace{-0.38cm}- }}

\newcommand{ \mr }{ \mathbb{R} }

\newcommand{\Norm}[1]{\left|\hspace{-0.3mm}\left| #1 \right|\hspace{-0.3mm}\right|}

\newcommand{\RN}[1]{%
  \textup{\uppercase\expandafter{\romannumeral#1}}%
}


\begin{document}

\title[Thin obstacle problem for the $p(x)$-Laplacian]{Regularity results of the thin obstacle problem for the $p(x)$-Laplacian}

\author{Sun-Sig Byun}
\address{Department of Mathematical Sciences and Research Institute of Mathematics,
Seoul National University, Seoul 08826, Korea}
\email{byun@snu.ac.kr}

\author{Ki-Ahm Lee}
\address{Department of Mathematical Sciences,
Seoul National University, Seoul 08826, Korea.
Center for Mathematical Challenges,
Korea Institute for Advanced Study, Seoul 02455, Korea}
\email{kiahm@snu.ac.kr}

\author{Jehan Oh}
\address{Department of Mathematical Sciences,
Seoul National University, Seoul 08826, Korea}
\email{ojhan0306@snu.ac.kr}

\author{Jinwan Park}
\address{Department of Mathematical Sciences,
Seoul National University, Seoul 08826, Korea}
\email{jinwann@snu.ac.kr}

\thanks{This work was supported by the National Research Foundation of Korea (NRF) grant funded by the Korea Government (NRF-2015R1A4A1041675).}

\subjclass[2010]{Primary 49N60; Secondary 35J20}

\date{\today.}

\keywords{regularity, $p(x)$-Laplacian, thin obstacle problem, variable exponent}

\maketitle

\begin{abstract}
We study thin obstacle problems involving the energy functional with $p(x)$-growth.
We prove higher integrability and H\"{o}lder regularity for the gradient of minimizers of the thin obstacle problems under the assumption that the variable exponent $p(x)$ is H\"{o}lder continuous.
\end{abstract}

\section{\bf Introduction}
\label{sec1}
In this paper we consider a \emph{thin obstacle problem for the $p(x)$-Laplacian}.
More precisely, we investigate a minimizer of the functional
\begin{equation}
\label{main_ftnl}
\mathcal{F}_{p(\cdot)}(v) := \int_{B^{+}_1} \frac{1}{p(x)} |Dv(x)|^{p(x)} \, dx
\end{equation}
over a convex admissible set 
\begin{equation}
\label{main_adset}
\mathcal{A} = \left\lbrace v \in W^{1,p(\cdot)}(B^{+}_1) : v=g \ \ \mathrm{on} \ \ (\partial B_1)^{+} \ \ \mathrm{and} \ \ v \geq 0 \ \ \mathrm{on} \ \ T_1 \right\rbrace,
\end{equation}
where $B^{+}_1 = B_1 \cap \{ x_n>0 \}$, $T_1 = B_1 \cap \{ x_n=0 \}$ with $n \geq 2$, and $g \in W^{1,p(\cdot)}(B^{+}_1)$.
Here, a variable exponent $p(\cdot) : \overline{B^{+}_1} \to (1,\infty)$ is assumed to be at least continuous, and satisfy
\begin{equation}
\label{bound_of_p(x)}
1 < \gamma_1 \leq p(x) \leq \gamma_2 < \infty
\end{equation}
for some constants $\gamma_1$ and $\gamma_2$.

For the case $p(x) \equiv 2$, the problem is called the \emph{boundary thin obstacle problem for the Laplacian} or the \emph{Signorini problem}.
This problem originates from optimal control of temperature \cite{Ath, Fre}, modelling of semipermeable membranes \cite{DL}, and financial mathematics \cite{ACS, Sil}.
The optimal regularity ($C^{1,\frac{1}{2}}$-regularity) for a minimizer of the problem for $n=2$ was shown by Richardson \cite{Ric}.
For a general dimesion $n \geq 2$, the $C^{1,\alpha}$-regularity for some $0 < \alpha \leq \frac{1}{2}$ was proved by Caffarelli \cite{Caf} and the optimal ($C^{1,\frac{1}{2}}$) result was achieved by Athanasopoulos and Caffarelli \cite{AC}.
For the case $p(x) \equiv p \in (1,\infty)$, the $C^{0,\alpha}$-regularity and the gradient estimates for a minimizer of the obstacle problem were established by B\"{o}gelein, F. Duzaar and Mingione \cite{BDM}, and the $C^{1,\alpha}$-regularity for a minimizer of the thin obstacle problem was obtained by Andersson and Mikayelyan \cite{AMik}.

The aim of this paper is to extend the $C^{1,\alpha}$-regularity obtained in \cite{AMik} to the thin obstacle problem for the $p(x)$-Laplacian.
In the process, we find a minimal regularity requirement on the variable exponent $p(x)$ to ensure the higher integrability, Theorem \ref{Higher integrability}, and the $C^{1,\alpha}$-regularity, Theorem \ref{MainThm_2}, for the thin obstacle problem (\ref{main_ftnl})-(\ref{main_adset}).

A main point in this paper is that the exponent $p(x)$ in the functional under consideration is not a constant function.
The functional (\ref{main_ftnl}) with $p(x)$-growth was first considered by Zhikov \cite{Zh1} in the context of homogenization, and in recent years there has been an increasing interest in this functional which provides a number of models arising in mathematical physics.
For instance, the functionals with $p(x)$-growth appear in the modelling of electro-rheological fluid \cite{RR1, Ru1, Ru2}, porous medium \cite{AS, LGCY}, image restoration \cite{CLR}, and fluid with temperature-dependent viscosity \cite{Zh2}.
Therefore, a great deal of work has been developed around variational problems with the $p(x)$-energy functional.
In particular, the higher integrability result for a minimizer of the $p(x)$-energy functional was obtained by Zhikov \cite{Zh1} (see also \cite{CM1}), and the $C^{1,\alpha}$-regularity was proved by Acerbi, Coscia and Mingione \cite{AMin, CM1}.
Here we establish these regularity results for a minimizer of the $p(x)$-energy functional with a thin obstacle.

We briefly introduce our approach to the proofs of our main results; Theorem \ref{Higher integrability} and Theorem \ref{MainThm_2}.
In order to get the higher integrability result, Theorem \ref{Higher integrability}, we shall derive a variational inequality of the thin obstacle problem and consider an associated problem whose solution enjoys the $C^{1,\alpha}$-regularity.
We then obtain the desired result by using Caccioppoli inequality, Poincar\'{e} inequality and Gehring lemma.
To prove Theorem \ref{MainThm_2}, we shall consider a minimizer of the thin obstacle problem for the $p$-Laplacian and derive a local estimate of the minimizer by comparison with an associated thin obstacle problem.

This paper is organized as follows.
In the next section, we present notation, function spaces and auxiliary lemmas.
Section \ref{sec3} is devoted to prove the higher integrability result.
In the last section, we finally prove the $C^{1,\alpha}$-regularity for a minimizer of the thin obstacle problem for the $p(x)$-Laplacian.

\section{\bf Preliminaries}
\label{sec2}

We start with introducing basic notation.

\begin{enumerate}
\item For a point $y \in \mr^n$ and for $r>0$, $B_r(y):=\{ x \in \mr^n : |x-y|<r \}$, $B^{+}_r(y) := B_r(y) \cap \{ x_n > 0 \}$, $(\partial B_r(y))^{+} := \partial B_r(y) \cap \{ x_n > 0 \}$, $T_r(y) := B_r(y) \cap \{ x_n = 0 \}$.
If the center is clear in the context, we shall omit denoting it as follows: $B_r \equiv B_r(y)$, $B^{+}_r \equiv B^{+}_r(y)$, $(\partial B_r)^{+} \equiv (\partial B_r(y))^{+}$, $T_r \equiv T_r(y)$.
\item For a function $f \in L_{\mathrm{loc}}^1(\mr^n)$ and a bounded open set $\Omega \subset \mr^n$, let $(f)_\Omega$ denote the integral average of $f$ in $\Omega$, that is,
\begin{equation*}
(f)_\Omega := \mint_\Omega f \, dx = \frac{1}{|\Omega|} \int_\Omega f \, dx.
\end{equation*}
\end{enumerate}

From now on, for the sake of convenience, we employ the letter $c$ to denote any universal constants which can be explicitly computed in terms of known quantities such as $n, \gamma_1, \gamma_2$.
Thus the exact value denoted by $c$ might be different from line to line.

\subsection{Function spaces}
\label{sec2_sub1}

Given a bounded domain $\Omega \subset \mr^n$ and a bounded measurable function $p(\cdot) : \Omega \subset \mr^n \rightarrow (1,\infty)$, \emph{the variable exponent Lebesgue space} $L^{p(\cdot)}(\Omega; \mr^N)$, $N \geq 1$, consists of all measurable functions $f : \Omega \rightarrow \mr^N$ such that
\begin{equation*}
\int_{\Omega} |f(x)|^{p(x)} dx < +\infty
\end{equation*}
with the following Luxemburg norm
\begin{equation*}
\Norm{f}_{L^{p(\cdot)}(\Omega; \mr^N)} := \inf \left\lbrace \lambda>0 : \int_{\Omega} \left| \frac{f(x)}{\lambda} \right|^{p(x)} dx \leq 1 \right\rbrace.
\end{equation*}

\emph{The variable exponent Sobolev space} $W^{1,p(\cdot)}(\Omega; \mr^N)$ is a collection of all measurable functions $f : \Omega \rightarrow \mr^N$ such that $f$ is weakly differentiable and its gradient $Df$ belongs to $L^{p(\cdot)}(\Omega; \mr^{Nn})$, that is
\begin{equation*}
W^{1,p(\cdot)}(\Omega; \mr^N) := \left\lbrace f \in L^{p(\cdot)}(\Omega; \mr^N) : Df \in L^{p(\cdot)}(\Omega; \mr^{Nn}) \right\rbrace,
\end{equation*}
equipped with the $W^{1,p(\cdot)}$-norm
\begin{equation*}
\Norm{f}_{W^{1,p(\cdot)}(\Omega; \mr^N)} := \Norm{f}_{L^{p(\cdot)}(\Omega; \mr^N)} + \Norm{Df}_{L^{p(\cdot)}(\Omega; \mr^{Nn})}.
\end{equation*}
We denote by $W_0^{1,p(\cdot)}(\Omega; \mr^N)$ to mean the closure of $C_0^{\infty}(\Omega; \mr^N)$ in $W^{1,p(\cdot)}(\Omega; \mr^N)$.
We notice that if $1 < \gamma_1 \leq p(\cdot) \leq \gamma_2 < \infty$ for some constants $\gamma_1$ and $\gamma_2$, then $L^{p(\cdot)}(\Omega; \mr^N)$, $W^{1,p(\cdot)}(\Omega; \mr^N)$ and $W_0^{1,p(\cdot)}(\Omega; \mr^N)$ are separable reflexive Banach spaces. For $N=1$, we simply write $L^{p(\cdot)}(\Omega)$, $W^{1,p(\cdot)}(\Omega)$ and $W_0^{1,p(\cdot)}(\Omega)$.
For further properties regarding variable exponent spaces, we refer to \cite{DHHR, DH, DR, DS, Ha, KR} and references therein.

We now present the Camapanato's spaces.
Let $p \geq 1$ and $\lambda \geq 0$.
We denote by $\mathcal{L}^{p,\lambda}(\Omega; \mr^N)$ to mean the space of functions $f \in L^p(\Omega; \mr^N)$ such that
\begin{equation*}
[f]_{p,\lambda} := \left\lbrace \sup_{\substack{x_0 \in \Omega \\ 0 < \rho < \mathrm{diam} \, \Omega}} \rho^{-\lambda} \int_{\Omega_{\rho}(x_0)} |f-(f)_{\Omega_{\rho}(x_0)}|^p \, dx \right\rbrace^{\frac{1}{p}} < +\infty,
\end{equation*}
where $\Omega_{\rho}(x_0) = \Omega \cap B_{\rho}(x_0)$.
We remark that the quantity $[f]_{p,\lambda}$ is a seminorm in $\mathcal{L}^{p,\lambda}$ and is equivalent to the quantity
\begin{equation*}
\left\lbrace \sup_{\substack{x_0 \in \Omega \\ 0 < \rho < \mathrm{diam} \, \Omega}} \rho^{-\lambda} \inf_{\xi \in \mr^N} \int_{\Omega_{\rho}(x_0)} |f-\xi|^p \, dx \right\rbrace^{\frac{1}{p}}.
\end{equation*}
Moreover, $\mathcal{L}^{p,\lambda}$ is a Banach space with the norm
\begin{equation*}
\Norm{f}_{p,\lambda} := \Norm{f}_{L^p} + [f]_{p,\lambda}.
\end{equation*}

The Campanato's spaces provide the following integral characterization of H\"{o}lder continuous functions.

\begin{lemma}\label{Campanato_embed}
\cite[Theorem 2.9]{Giu}
Let $\Omega$ be a bounded domain in $\mr^n$, and let $p \geq 1$ and $n < \lambda \leq n+p$.
Suppose that there exists a constant $A>0$ such that for every $x_0 \in \overline{\Omega}$ and for every $\rho \in (0,\mathrm{diam}\, \Omega)$, we have $|\Omega_{\rho}(x_0)| = |\Omega \cap B_{\rho}(x_0)| \geq A \rho^n$.
Then the space $\mathcal{L}^{p,\lambda}(\Omega; \mr^N)$ is isomorphic to the space $C^{0,\alpha}(\Omega; \mr^N)$ with $\alpha = \frac{\lambda-n}{p}$.
\end{lemma}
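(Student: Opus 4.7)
The goal is to establish the two inclusions $C^{0,\alpha}(\Omega;\mathbb{R}^N) \hookrightarrow \mathcal{L}^{p,\lambda}(\Omega;\mathbb{R}^N)$ and $\mathcal{L}^{p,\lambda}(\Omega;\mathbb{R}^N) \hookrightarrow C^{0,\alpha}(\Omega;\mathbb{R}^N)$ with $\alpha = (\lambda - n)/p$. The first inclusion is essentially immediate: if $f$ is Hölder continuous with exponent $\alpha$, then for every $x_0 \in \overline{\Omega}$ and $\rho > 0$ one has $|f(x) - (f)_{\Omega_\rho(x_0)}| \leq [f]_{C^{0,\alpha}}(2\rho)^{\alpha}$ on $\Omega_\rho(x_0)$, so integration yields the seminorm bound $[f]_{p,\lambda} \leq c [f]_{C^{0,\alpha}}$, using $\lambda \leq n + p\alpha = n+(\lambda-n)$. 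The substantive content is the reverse inclusion.

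For the reverse, I would fix $x_0 \in \overline{\Omega}$ and compare the averages of $f$ on two dyadically nested sets. Writing $\rho_k = 2^{-k}\rho$, I use
\begin{equation*}
|(f)_{\Omega_{\rho_{k+1}}(x_0)} - (f)_{\Omega_{\rho_k}(x_0)}| \leq \frac{1}{|\Omega_{\rho_{k+1}}(x_0)|} \int_{\Omega_{\rho_{k+1}}(x_0)} |f - (f)_{\Omega_{\rho_k}(x_0)}| \, dx,
\end{equation*}
apply Hölder's inequality, and invoke the measure-density hypothesis $|\Omega_{\rho_{k+1}}(x_0)| \geq A \rho_{k+1}^n$ together with the defining estimate of $\mathcal{L}^{p,\lambda}$. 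This gives a bound of the form $c \, [f]_{p,\lambda} \rho_k^{(\lambda-n)/p} = c\,[f]_{p,\lambda}\rho_k^\alpha$. Since $\alpha > 0$, summing the geometric series shows $\{(f)_{\Omega_{\rho_k}(x_0)}\}_k$ is Cauchy; its limit defines a precise representative $\tilde f(x_0)$ which coincides a.e.\ with $f$ by Lebesgue differentiation. A telescoping argument then yields
\begin{equation*}
|\tilde f(x_0) - (f)_{\Omega_\rho(x_0)}| \leq c\, [f]_{p,\lambda}\, \rho^\alpha.
\end{equation*}

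To conclude Hölder continuity, take $x_1, x_2 \in \overline{\Omega}$ and set $d = |x_1-x_2|$, $\rho = 2d$. The key observation is that $\Omega_\rho(x_1) \cap \Omega_\rho(x_2)$ contains, say, $\Omega_d(x_1)$ (since $B_d(x_1) \subset B_\rho(x_i)$), and therefore has measure at least $A d^n$ by the density hypothesis. Splitting
\begin{equation*}
|\tilde f(x_1) - \tilde f(x_2)| \leq |\tilde f(x_1) - (f)_{\Omega_\rho(x_1)}| + |(f)_{\Omega_\rho(x_1)} - (f)_{\Omega_\rho(x_2)}| + |(f)_{\Omega_\rho(x_2)} - \tilde f(x_2)|,
\end{equation*}
the outer two terms are controlled by the previous step. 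For the middle term I estimate the difference of the two averages by integrating over the common set $\Omega_d(x_1)$, applying Hölder's inequality, and using the $\mathcal{L}^{p,\lambda}$ seminorm twice; the measure lower bound on $\Omega_d(x_1)$ produces the correct power $d^\alpha$.

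The main obstacle is the middle-term estimate: one must exploit the measure-density condition carefully so that the comparison between averages over two different, possibly irregular, sets $\Omega_\rho(x_1)$ and $\Omega_\rho(x_2)$ still yields the correct exponent $\alpha = (\lambda-n)/p$. The hypothesis $\lambda \leq n + p$ is used to ensure $\alpha \leq 1$, giving genuine Hölder (rather than merely Lipschitz-plus-constant) regularity, while $\lambda > n$ is what drives the geometric series to converge with a positive exponent. Everything else amounts to standard Hölder/Jensen manipulations with the $\mathcal{L}^{p,\lambda}$ seminorm.
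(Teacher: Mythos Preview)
The paper does not supply its own proof of this lemma: it is quoted verbatim as \cite[Theorem 2.9]{Giu} and used as a black box in the proof of Theorem~\ref{MainThm_2}. So there is no in-paper argument to compare against.

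That said, your outline is the standard Campanato argument and is correct. The dyadic telescoping of averages, the use of the measure-density hypothesis $|\Omega_\rho(x_0)|\ge A\rho^n$ to convert the $L^p$ oscillation bound into a pointwise bound on successive averages, and the three-term splitting for $|\tilde f(x_1)-\tilde f(x_2)|$ are exactly how the result is proved in Giusti's book. Your identification of the roles of the hypotheses is also accurate: $\lambda>n$ makes the geometric series converge with positive exponent, and $\lambda\le n+p$ keeps $\alpha\le 1$. One minor remark: in the middle-term estimate you can simplify by noting that both $|(f)_{\Omega_\rho(x_i)}-(f)_{\Omega_d(x_1)}|$ are controlled by the same telescoping bound already established (since $\Omega_d(x_1)\subset\Omega_\rho(x_i)$ and one can interpolate via a short dyadic chain), which avoids having to integrate simultaneously over two different sets; but your version works as well.
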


\subsection{Auxiliary lemmas}
\label{sec2_sub2}

We shall use the following Sobolev-Poincar\'{e} type inequality.

\begin{lemma}\label{So-Po ineq}
\cite{Eva}
Let $1 < \gamma_1 \leq p \leq \gamma_2 < \infty$ and $r>0$.
For any $f \in W^{1,p}(B_r^+)$, we have
\begin{align*}
\mint_{B_r^+} \left( \frac{|f-(f)_{B_r^+}|}{r} \right)^p dx & \leq c(n,p) \left( \mint_{B_r^+} |Df|^{\frac{np}{n+p}} \, dx \right)^{\frac{n+p}{n}} \\
& \leq c(n,\gamma_1,\gamma_2) \left( \mint_{B_r^+} |Df|^{\frac{np}{n+\gamma_1}} \, dx \right)^{\frac{n+\gamma_1}{n}}.
\end{align*}
\end{lemma}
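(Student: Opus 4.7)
The plan is to prove the statement in two pieces: first establish the left-hand inequality, which is the genuine Sobolev--Poincar\'e estimate, and then obtain the right-hand inequality by a routine application of H\"older's inequality.

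For the first inequality, I would begin with a scaling reduction. Setting $\tilde f(y) := f(ry)/r$ on $B_1^+$ transforms the claim into the statement
\[
\int_{B_1^+} |\tilde f - (\tilde f)_{B_1^+}|^p \, dy \leq c(n,p) \left( \int_{B_1^+} |D\tilde f|^{\frac{np}{n+p}} \, dy \right)^{\frac{n+p}{n}},
\]
so it suffices to prove the inequality on the unit half-ball. Since $B_1^+$ is a bounded Lipschitz domain satisfying a uniform cone condition, the standard Sobolev--Poincar\'e inequality applies: for $q := \frac{np}{n+p}$ we have $q^* = p$, and the continuous embedding $W^{1,q}(B_1^+) \hookrightarrow L^p(B_1^+)$ combined with the Poincar\'e inequality for $L^q$-mean-zero functions yields exactly the desired estimate. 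Equivalently, one can extend $\tilde f$ to the whole ball $B_1$ by even reflection across $\{x_n = 0\}$ (which preserves $W^{1,q}$ up to a dimensional multiplicative constant) and then invoke the classical Sobolev--Poincar\'e inequality on $B_1$.

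For the second inequality, since $\gamma_1 \leq p$ we have $n+\gamma_1 \leq n+p$, hence $\frac{np}{n+\gamma_1} \geq \frac{np}{n+p}$. H\"older's inequality then gives
\[
\mint_{B_r^+} |Df|^{\frac{np}{n+p}} \, dx \leq \left( \mint_{B_r^+} |Df|^{\frac{np}{n+\gamma_1}} \, dx \right)^{\frac{n+\gamma_1}{n+p}}.
\]
Raising both sides to the power $\frac{n+p}{n}$ converts the right-hand exponent into $\frac{n+\gamma_1}{n}$, which yields the claimed bound with a constant depending only on $n$, $\gamma_1$, $\gamma_2$ (the dependence on $p$ in the first constant being controlled uniformly on the compact range $[\gamma_1,\gamma_2]$).

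The main (and really the only) obstacle is the half-ball geometry in the first step; once one invokes either the reflection trick or the Sobolev--Poincar\'e inequality on Lipschitz domains, the rest is scaling plus an elementary H\"older interpolation. No delicate estimate is needed, so the proof is essentially an assembly of standard facts stated in \cite{Eva} and references therein.
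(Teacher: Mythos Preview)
Your argument is correct and follows the standard route; note, however, that the paper does not actually prove this lemma---it is stated with a bare citation to \cite{Eva}---so there is no proof in the paper to compare against. One small technical point worth flagging: when $p$ is close to $1$ the auxiliary exponent $q=\tfrac{np}{n+p}$ can drop below $1$, so the phrase ``the continuous embedding $W^{1,q}(B_1^+)\hookrightarrow L^p(B_1^+)$'' is not literally available from Evans; the inequality itself still holds (e.g.\ via the Riesz-potential/Hedberg argument or by first applying the $q=1$ case and truncating), but you may want to cite a reference that covers the sub-unit exponent rather than \cite{Eva} alone.
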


We now state and prove a technical lemma.
For the standard technical lemma, we refer the reader to \cite[Lemma 2.1 of Chpater 3]{Gia} and \cite[Lemma 3.4]{HL}.

\begin{lemma}\label{technical lemma}
Let $\varphi$ be a non-negative and non-decreasing function on $[0,r_0]$.
Suppose that
\begin{equation}
\label{tech_ineq_1}
\varphi(\rho) \leq A \left\lbrace \left( \frac{\rho}{r} \right)^{\alpha_1} + \varepsilon \right\rbrace \varphi(2r) + Br^{\alpha_2}
\end{equation}
for all $0 < \rho < r \leq \frac{r_0}{2}$, where $A$, $B$, $\alpha_1$, $\alpha_2$ are non-negative constants with $\alpha_1 > \alpha_2$.
Then there exists $\varepsilon_0 = \varepsilon_0(A,\alpha_1,\alpha_2)>0$ such that if $0 \leq \varepsilon < \varepsilon_0$, we have
\begin{equation}
\label{tech_ineq_2}
\varphi(\rho) \leq c \left\lbrace \left( \frac{\rho}{r} \right)^{\alpha_2} \varphi(r) + B \rho^{\alpha_2} \right\rbrace
\end{equation}
for all $0 < \rho < r \leq r_0$, where $c=c(A,\alpha_1,\alpha_2)$ is a positive constant.
\end{lemma}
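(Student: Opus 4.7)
The plan is to adapt the classical Campanato--Morrey iteration (compare \cite[Ch.~III, Lemma~2.1]{Gia}) so that the extra $\varepsilon\,\varphi(2r)$ term is absorbed into the main geometric decay. First, I would fix an intermediate exponent $\alpha_3$ with $\alpha_2<\alpha_3<\alpha_1$ and pick $\tau\in(0,\tfrac12)$ so small that $A\cdot 2^{\alpha_1}\tau^{\alpha_1-\alpha_3}\leq \tfrac12$, which is possible since $\alpha_1>\alpha_3$. Setting $\varepsilon_0:=\tau^{\alpha_3}/(2A)$, for any $0\leq\varepsilon<\varepsilon_0$ and $R\in(0,r_0]$ the hypothesis (\ref{tech_ineq_1}) applied with $r:=R/2$ and $\rho:=\tau R$ gives the one-step contraction
\[\varphi(\tau R)\leq A\bigl\{(2\tau)^{\alpha_1}+\varepsilon\bigr\}\varphi(R)+B(R/2)^{\alpha_2}\leq \tau^{\alpha_3}\varphi(R)+BR^{\alpha_2}.\]

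Second, I would iterate. Writing $R_k:=\tau^kR$ and inducting on $k$,
\[\varphi(R_k)\leq \tau^{k\alpha_3}\varphi(R)+BR^{\alpha_2}\sum_{j=0}^{k-1}\tau^{(k-1-j)\alpha_3+j\alpha_2}.\]
Since $\alpha_3>\alpha_2$, the geometric tail is bounded by $(1-\tau^{\alpha_3-\alpha_2})^{-1}\tau^{(k-1)\alpha_2}$, and $\tau^{k\alpha_3}\leq\tau^{k\alpha_2}$ because $\tau<1$. Substituting $\tau^k=R_k/R$ and lumping the $\tau$-constants into a single $c=c(A,\alpha_1,\alpha_2)$ produces
\[\varphi(R_k)\leq c\bigl\{(R_k/R)^{\alpha_2}\varphi(R)+BR_k^{\alpha_2}\bigr\}.\]
To handle arbitrary $0<\rho<R\leq r_0$, I would pick the unique $k\geq0$ with $\tau^{k+1}R\leq\rho<\tau^kR$, apply the monotonicity $\varphi(\rho)\leq\varphi(R_k)$, and absorb the factor $\tau^{-\alpha_2}$ arising from replacing $R_k$ by $\rho$ into the constant; renaming $R$ as $r$ then yields (\ref{tech_ineq_2}).

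The only genuinely delicate point is the coupling between $\tau$ and $\varepsilon_0$: $\tau$ must be fixed first so that the clean part $A(2\tau)^{\alpha_1}$ is already dominated by $\tfrac12\tau^{\alpha_3}$, and only then can $\varepsilon_0$ be chosen in terms of this already-fixed $\tau$ so that $A\varepsilon\leq\tfrac12\tau^{\alpha_3}$ also holds, producing the stated dependence $\varepsilon_0=\varepsilon_0(A,\alpha_1,\alpha_2)$. Once this absorption is arranged, the remainder is routine bookkeeping with a geometric series.
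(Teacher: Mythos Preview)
Your proposal is correct and follows essentially the same approach as the paper's proof: both fix an intermediate exponent $\alpha_3\in(\alpha_2,\alpha_1)$, choose a small geometric ratio (your $\tau$, the paper's $\kappa$) so that the hypothesis applied with $r=R/2$, $\rho=\tau R$ yields the one-step contraction $\varphi(\tau R)\le \tau^{\alpha_3}\varphi(R)+BR^{\alpha_2}$, iterate along $\tau^kR$, sum the resulting geometric series using $\alpha_3>\alpha_2$, and then interpolate to arbitrary $\rho$ via monotonicity. The only differences are cosmetic---your explicit choice $\varepsilon_0=\tau^{\alpha_3}/(2A)$ versus the paper's $\varepsilon_0<\kappa^{\alpha_1}$, and your unified treatment of the range $\rho\in[\tau R,R)$ as the $k=0$ case rather than a separate paragraph.
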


\begin{proof}
For $\kappa \in (0,\frac{1}{2})$ and $r \leq r_0$, we can rewrite (\ref{tech_ineq_1}) as
\begin{align*}
\varphi(\kappa r) & \leq A \left\lbrace (2\kappa)^{\alpha_1} + \varepsilon \right\rbrace \varphi(r) + B \left( \frac{r}{2} \right)^{\alpha_2} \\
& \leq (2\kappa)^{\alpha_1} A \left\lbrace 1 + \varepsilon \kappa^{-\alpha_1}  \right\rbrace \varphi(r) + B r^{\alpha_2}.
\end{align*}
We now choose $\kappa \in (0,\frac{1}{2})$ and $\varepsilon_0 >0$ in such a way that $2^{\alpha_1+1} \kappa^{\alpha_1} A \leq \kappa^{\alpha_3}$ with $\alpha_1 > \alpha_3 > \alpha_2$ and $\varepsilon_0 \kappa^{-\alpha_1} < 1$.
Then we get for every $r \leq r_0$,
\begin{equation*}
\varphi(\kappa r) \leq \kappa^{\alpha_3} \varphi(r) + B r^{\alpha_2}.
\end{equation*}
Therefore, for all integers $m \geq 0$, we have
\begin{align*}
\varphi(\kappa^{m+1} r) & \leq \kappa^{\alpha_3} \varphi(\kappa^m r) + B \kappa^{m\alpha_2} r^{\alpha_2} \\
& \leq \kappa^{(m+1)\alpha_3} \varphi(r) + B \kappa^{m\alpha_2} r^{\alpha_2} \sum_{j=0}^{m} \kappa^{j(\alpha_3-\alpha_2)} \\
& \leq c \kappa^{(m+1)\alpha_2} \left\lbrace \varphi(r) + Br^{\alpha_2} \right\rbrace
\end{align*}
for some constant $c=c(A,\alpha_1,\alpha_2)>1$.
Choosing $m$ such that $\kappa^{m+2}r < \rho \leq \kappa^{m+1}r$, we obtain (\ref{tech_ineq_2}) for all $\rho \in (0, \kappa r)$.
Since $\varphi$ is a non-decreasing function on $[0,r_0]$, we also discover that for $\rho \in (\kappa r, r)$,
\begin{equation*}
\varphi(\rho) \leq \varphi(r) = \left( \frac{1}{\kappa} \right)^{\alpha_2}  \kappa^{\alpha_2} \varphi(r) \leq \left( \frac{1}{\kappa} \right)^{\alpha_2} \left\lbrace \left( \frac{\rho}{r} \right)^{\alpha_2} \varphi(r) + B \rho^{\alpha_2} \right\rbrace,
\end{equation*}
which proves the lemma.
\end{proof}

\section{\bf Higher integrability of the gradient}
\label{sec3}

In this section, we establish the higher integrability for the gradient of a minimizer of the functional $\mathcal{F}_{p(\cdot)}$ in (\ref{main_ftnl}) over the admissible set $\mathcal{A}$ in (\ref{main_adset}).
We first present a variational inequality of the thin obstacle problem.

\begin{lemma}
\label{EL-lemma}
Let $u \in W^{1,p(\cdot)}(B^{+}_1)$ be a minimizer of the functional $\mathcal{F}_{p(\cdot)}$ over the admissible set $\mathcal{A}$ and let
\begin{equation}
\label{sub_adset}
\mathcal{A}_0 = \left\lbrace v \in W^{1,p(\cdot)}(B^{+}_1) : \ v=0 \ \ \mathrm{on} \ \ (\partial B_1)^{+} \ \ \mathrm{and} \ \ v \geq -u \ \ \mathrm{on} \ \ T_1 \right\rbrace.
\end{equation}
Then we have
\begin{equation}
\label{EL}
\int_{B^{+}_1} |Du|^{p(x)-2} Du \cdot Dv \, dx \geq 0, \quad \forall v \in \mathcal{A}_0.
\end{equation}
\end{lemma}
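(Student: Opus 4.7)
The plan is the usual variational trick for one-sided obstacle problems, adapted to the fact that here the obstacle is effective only on the thin set $T_1$.

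Fix $v \in \mathcal{A}_0$. First I would check that the perturbation $u+\varepsilon v$ lies in $\mathcal{A}$ for every $\varepsilon \in (0,1)$. The Dirichlet condition on $(\partial B_1)^+$ is immediate, since $v$ vanishes there in the trace sense, so $u+\varepsilon v = g$ on $(\partial B_1)^+$. For the obstacle condition on $T_1$, I would use $v \geq -u$ on $T_1$ together with $u \geq 0$ on $T_1$ to get
\begin{equation*}
u + \varepsilon v \geq u - \varepsilon u = (1-\varepsilon)u \geq 0 \quad \text{on } T_1,
\end{equation*}
which is exactly what is needed for admissibility. Note the restriction $\varepsilon < 1$ is essential here, and this is the only place that the \emph{one-sided} nature of the obstacle enters; a two-sided perturbation is not available, which is precisely why we get an inequality rather than an equality.

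Once admissibility is secured, the minimality of $u$ gives $\mathcal{F}_{p(\cdot)}(u+\varepsilon v) \geq \mathcal{F}_{p(\cdot)}(u)$ for all $\varepsilon \in (0,1)$, so after dividing by $\varepsilon$,
\begin{equation*}
\frac{1}{\varepsilon} \int_{B_1^+} \frac{|Du + \varepsilon Dv|^{p(x)} - |Du|^{p(x)}}{p(x)} \, dx \geq 0.
\end{equation*}
Now I would pass to the limit as $\varepsilon \to 0^+$. The pointwise integrand converges to $|Du|^{p(x)-2}Du \cdot Dv$. To justify interchanging limit and integral, I would invoke convexity of $t \mapsto \tfrac{1}{p(x)}|t|^{p(x)}$: the difference quotient is monotone non-increasing as $\varepsilon \downarrow 0$ and is bounded above by $\tfrac{1}{p(x)}\bigl(|Du+Dv|^{p(x)} - |Du|^{p(x)}\bigr)$, which belongs to $L^1(B_1^+)$ because $u,v \in W^{1,p(\cdot)}(B_1^+)$ and \eqref{bound_of_p(x)} holds. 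Monotone/dominated convergence then yields
\begin{equation*}
\int_{B_1^+} |Du|^{p(x)-2} Du \cdot Dv \, dx \geq 0,
\end{equation*}
which is \eqref{EL}.

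The only non-routine point is the passage to the limit in the difference quotient; the convexity of the integrand, together with the two-sided bound $1 < \gamma_1 \leq p(x) \leq \gamma_2 < \infty$ ensuring that all the relevant quantities are integrable, handles this without difficulty. Everything else reduces to verifying that the proposed test function stays in the admissible set.
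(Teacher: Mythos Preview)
Your argument is correct and follows essentially the same route as the paper: both verify that $u+\varepsilon v \in \mathcal{A}$ via the inequality $u+\varepsilon v \geq (1-\varepsilon)u \geq 0$ on $T_1$, and then take the first variation. The only difference is cosmetic---the paper simply writes $\frac{d}{dt}\big|_{t=0}\mathcal{F}_{p(\cdot)}(u+tv)\geq 0$ and differentiates under the integral sign, whereas you spell out the difference-quotient limit and justify it via convexity; your version is, if anything, slightly more careful on this point.
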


\begin{proof}
Let $v \in \mathcal{A}_0$.
Then we see at once that $u+tv=g$ on $(\partial B_1)^{+}$ and that $u+tv \geq (1-t)u \geq 0$ on $T_1$ for $0 \leq t \leq 1$.
Hence $u+tv \in \mathcal{A}$ for all $0 \leq t \leq 1$, where $\mathcal{A}$ is the admissible set in (\ref{main_adset}).
Since $u$ is a minimizer of $\mathcal{F}_{p(\cdot)}$, we have
\begin{equation}
\label{EL-pf1}
\left. \frac{d}{dt} \right|_{t=0} \mathcal{F}_{p(\cdot)}(u+tv) \geq 0.
\end{equation}
Observe that
\begin{align}
\nonumber \frac{d}{dt} \mathcal{F}_{p(\cdot)}(u+tv) & = \int_{B^{+}_1} \frac{d}{dt} \left[ \frac{1}{p(x)} (|Du+tDv|^2)^{\frac{p(x)}{2}} \right] dx \\
\nonumber & = \int_{B^{+}_1} (|Du+tDv|^2)^{\frac{p(x)}{2}-1} (Du+tDv) \cdot Dv \, dx \\
\label{EL-pf2} & = \int_{B^{+}_1} |Du+tDv|^{p(x)-2} (Du+tDv) \cdot Dv \, dx.
\end{align}
Combining (\ref{EL-pf1}) with (\ref{EL-pf2}), we obtain the desired conclusion (\ref{EL}).
\end{proof}

We now consider an associated problem whose solution has $C^{1,\theta}$-regularity for some $\theta \in (0,1)$, and show a comparison result.

\begin{lemma}
\label{Obs-Lemma}
Given a minimizer $u \in W^{1,p(\cdot)}(B^{+}_1)$ of $\mathcal{F}_{p(\cdot)}$ over the admissible set $\mathcal{A}$, let $w \in W^{1,p(\cdot)}(B^{+}_1)$ be the weak solution of
\begin{eqnarray}\begin{split}
\label{refer-prob}
\left\{
\begin{array}{cl}
- \mathrm{div} \left( |Dw|^{p(x)-2} Dw \right) = 0 \ & \mathrm{in} \ \ B^{+}_1, \\
\displaystyle w \equiv \inf_{(\partial B_{1})^{+}} u & \mathrm{on} \ \ (\partial B_1)^{+}, \\
w \equiv 0 & \mathrm{on} \ \ T_1. \\
\end{array}
\right.
\end{split}\end{eqnarray}
Then $w \in C^{1,\theta}(\overline{B^{+}_{\frac{3}{4}}})$ for some $\theta \in (0,1)$.
In addition, we have $u \geq w$ in $B^{+}_1$.
\end{lemma}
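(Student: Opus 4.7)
The statement has two parts, and I will address them separately. The domain $\overline{B^{+}_{3/4}}$ meets $\partial B^{+}_{1}$ only along the flat piece $T_{3/4}$, so to obtain $C^{1,\theta}(\overline{B^{+}_{3/4}})$ I only need interior regularity together with boundary regularity along the flat portion where $w=0$. My plan is to reduce both to purely interior regularity via odd reflection across $T_1$. Concretely, I would set $\tilde{w}(x',x_n) := -w(x',-x_n)$ for $x_n<0$ and $\tilde{w}:=w$ for $x_n\geq 0$, and extend the exponent evenly by $\tilde{p}(x',x_n) := p(x',|x_n|)$. A direct change-of-variable computation shows that $|D\tilde{w}|^{\tilde{p}(x)-2} D\tilde{w}$ is the reflection of $|Dw|^{p(x)-2} Dw$ whose tangential components flip sign and whose normal component is even, so its distributional divergence on $B_1$ vanishes (no $\delta$-contribution arises on $T_1$ because the normal component matches across the interface). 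Thus $\tilde{w}$ is a weak solution of the $\tilde{p}(x)$-Laplace equation on $B_1$, $\tilde{p}$ still satisfies \eqref{bound_of_p(x)} and inherits the continuity of $p$, so the interior $C^{1,\alpha}$-theory of Acerbi--Coscia--Mingione applies on $B_{3/4}\subset\subset B_1$ and yields $\tilde{w}\in C^{1,\theta}(\overline{B_{3/4}})$; restricting gives the claim.

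For the comparison $u\geq w$, the plan is to test both the variational inequality of Lemma~\ref{EL-lemma} and the weak equation for $w$ against $v := (w-u)^{+}$. First I verify that $v$ is an admissible test function for each. On $T_1$ we have $u\geq 0$ and $w=0$, so $w-u\leq 0$ and $v=0$; hence $v=0\geq -u$ on $T_1$ (so $v\in\mathcal{A}_0$). On $(\partial B_1)^{+}$ we have $u=g\geq \inf_{(\partial B_1)^{+}} u = w$, so $v=0$ there as well; thus $v\in W^{1,p(\cdot)}_0(B^{+}_1)$ and can be used in the weak formulation of \eqref{refer-prob}. Inserting $v$ into \eqref{EL} and subtracting the identity $\int_{B^{+}_1} |Dw|^{p(x)-2} Dw\cdot Dv \, dx = 0$ yields
\[
\int_{\{w>u\}} \bigl( |Dw|^{p(x)-2} Dw - |Du|^{p(x)-2} Du \bigr) \cdot D(w-u) \, dx \leq 0.
\]
Strict monotonicity of the map $\xi\mapsto |\xi|^{p(x)-2}\xi$ (valid for every $p(x)>1$) makes the integrand nonnegative, forcing $D(w-u)=0$ a.e.\ on $\{w>u\}$, i.e.\ $Dv\equiv 0$ on $B^{+}_1$. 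Combined with the zero boundary trace of $v$ and a Poincar\'e inequality, this gives $v\equiv 0$, that is, $u\geq w$ in $B^{+}_1$.

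The main obstacle is the reflection step: one has to check carefully that odd-reflecting $w$ and even-reflecting $p$ really produces a bona fide weak solution of a $p(x)$-Laplace equation on the whole of $B_1$ (no spurious boundary term on $T_1$), that $\tilde{p}$ retains whatever regularity is required to invoke the Acerbi--Coscia--Mingione $C^{1,\alpha}$-theory, and that the resulting interior estimate transfers back without loss to $\overline{B^{+}_{3/4}}$. The comparison step, once $(w-u)^{+}$ is identified as the correct test function, is essentially routine thanks to the monotonicity of the $p(x)$-Laplacian.
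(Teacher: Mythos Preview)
Your proposal is correct and follows essentially the same approach as the paper: odd reflection of $w$ and even reflection of $p$ to reduce the $C^{1,\theta}$ claim to the interior Acerbi--Mingione theory, and testing with $v=(w-u)^{+}$ (which lies in $W^{1,p(\cdot)}_0(B^+_1)\subset\mathcal{A}_0$) for the comparison. The only cosmetic difference is that in the final step the paper passes through a quantitative $\varepsilon$-monotonicity inequality \eqref{monotonicity} and lets $\varepsilon\to 0$ (this inequality is recorded because it is reused later in Lemma~\ref{Comparion_est_lemma}), whereas you invoke strict monotonicity directly and then Poincar\'e; both conclude $v\equiv 0$.
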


\begin{proof}
Let $\widetilde{w}$ denote the odd extension of $w$ from $B^{+}_1$ to $B_1$ as
\begin{eqnarray*}\begin{split}
\widetilde{w}(x) = \widetilde{w}(x_1, \cdots x_{n-1}, x_n) := \left\{
\begin{array}{cl}
w(x_1, \cdots x_{n-1}, x_n), & \mathrm{if} \ \ x_n \geq 0, \\
-w(x_1, \cdots x_{n-1}, -x_n), & \mathrm{if} \ \ x_n < 0,
\end{array}
\right.
\end{split}\end{eqnarray*}
and let $\widetilde{p}$ denote the even extension of $p$ from $B^{+}_1$ to $B_1$ as
\begin{eqnarray*}\begin{split}
\widetilde{p}(x) = \widetilde{p}(x_1, \cdots x_{n-1}, x_n) := \left\{
\begin{array}{cl}
p(x_1, \cdots x_{n-1}, x_n), & \mathrm{if} \ \ x_n \geq 0, \\
p(x_1, \cdots x_{n-1}, -x_n), & \mathrm{if} \ \ x_n < 0.
\end{array}
\right.
\end{split}\end{eqnarray*}
Then it follows from (\ref{refer-prob}) that $\widetilde{w} \in W^{1,\widetilde{p}(\cdot)}(B_1)$ is a weak solution of
\begin{equation*}
- \mathrm{div} \left( |D\widetilde{w}|^{\widetilde{p}(x)-2} D\widetilde{w} \right) = 0 \quad \mathrm{in} \ \ B_1.
\end{equation*}
Hence, we conclude that $\widetilde{w} \in C^{1,\theta}(\overline{B_{\frac{3}{4}}})$ for some $\theta \in (0,1)$ (see \cite{AMin}).
This yields that $w \in C^{1,\theta}(\overline{B^{+}_{\frac{3}{4}}})$.

We next prove that $u \geq w$ in $B^{+}_1$.
To see this, we start with an observation that for each $q \in (1,\infty)$, there exists a constant $c=c(q)>0$ such that
\begin{equation*}
(|\xi_1| + |\xi_2|)^{q-2} |\xi_1 - \xi_2|^2 \leq c \left( |\xi_1|^{q-2}\xi_1 - |\xi_2|^{q-2}\xi_2 \right) \cdot (\xi_1 - \xi_2)
\end{equation*}
for every $\xi_1, \xi_2 \in \mr^n$.
From this, we obtain that if $q \geq 2$, then
\begin{equation*}
|\xi_1 - \xi_2|^q \leq c \left( |\xi_1|^{q-2}\xi_1 - |\xi_2|^{q-2}\xi_2 \right) \cdot (\xi_1 - \xi_2)
\end{equation*}
for some $c=c(q)>0$.
If $1<q<2$, it follows from Young's inequality with $\varepsilon \in (0,1)$ that
\begin{align*}
|\xi_1 - \xi_2|^q & = (|\xi_1| + |\xi_2|)^{\frac{q(2-q)}{2}} (|\xi_1| + |\xi_2|)^{\frac{q(q-2)}{2}} |\xi_1 - \xi_2|^q \\
& \leq \varepsilon (|\xi_1| + |\xi_2|)^q + c \varepsilon^{-\frac{2-q}{q}} (|\xi_1| + |\xi_2|)^{q-2} |\xi_1 - \xi_2|^2 \\
& \leq \varepsilon (|\xi_1| + |\xi_2|)^q + c \varepsilon^{-1} (|\xi_1| + |\xi_2|)^{q-2} |\xi_1 - \xi_2|^2 \\
& \leq c \varepsilon (|\xi_1|^q + |\xi_2|^q) + c \varepsilon^{-1} \left( |\xi_1|^{q-2}\xi_1 - |\xi_2|^{q-2}\xi_2 \right) \cdot (\xi_1 - \xi_2),
\end{align*}
for some $c=c(q)>0$.
Therefore, if $1 < \gamma_1 \leq p(x) \leq \gamma_2 < \infty$, we have for any $\varepsilon \in (0,1)$,
\begin{align}
\nonumber |\xi_1 - \xi_2|^{p(x)} & \leq c \varepsilon (|\xi_1|^{p(x)} + |\xi_2|^{p(x)}) \\
\label{monotonicity} & \qquad + c \varepsilon^{-1} \left( |\xi_1|^{p(x)-2}\xi_1 - |\xi_2|^{p(x)-2}\xi_2 \right) \cdot (\xi_1 - \xi_2),
\end{align}
for some $c=c(\gamma_1,\gamma_2)>0$.

Now, by taking $v=(w-u)_{+} \in W_0^{1,p(\cdot)}(B^{+}_1) \subset \mathcal{A}_0$ as a test function of (\ref{refer-prob}) and (\ref{EL}), we get
\begin{equation*}
\int_{B^{+}_1 \cap \{ w>u \}} \left( |Dw|^{p(x)-2}Dw - |Du|^{p(x)-2}Du \right) \cdot (Dw - Du) \, dx \leq 0.
\end{equation*}
Therefore, it follows from (\ref{monotonicity}) that for any $\varepsilon \in (0,1)$,
\begin{equation*}
\int_{B^{+}_1 \cap \{ w>u \}} |Dw-Du|^{p(x)} \, dx \leq c \varepsilon \int_{B^{+}_1 \cap \{ w>u \}} \left( |Dw|^{p(x)} + |Du|^{p(x)} \right) dx.
\end{equation*}
Letting $\varepsilon \to 0$, we conclude that $w \leq u$ in $B^{+}_1$.
\end{proof}

From now on, we fix a minimizer $u \in W^{1,p(\cdot)}(B^{+}_1)$ of the thin obstacle problem (\ref{main_ftnl})-(\ref{main_adset}), and we take the weak solution $w \in W^{1,p(\cdot)}(B^{+}_1)$ to the problem (\ref{refer-prob}).
We write
\begin{equation}
\label{def_of_M}
M := \int_{B^{+}_1} \left( |Du|^{p(x)} + |Dw|^{p(x)} + 1 \right) dx + 1.
\end{equation}

Now, we are ready to prove the higher integrability of the gradient of $u$.

\begin{theorem}
[Higher integrability]\label{Higher integrability}
Suppose that there exists $\beta>0$ such that
\begin{equation}
|p(x)-p(y)| \leq [p(\cdot)]_{\beta} |x-y|^{\beta}, \quad \forall x, y \in \overline{B^{+}_1},
\end{equation}
for some constant $[p(\cdot)]_{\beta}>0$.
Then there exists $\sigma_0 = \sigma_0(n,\gamma_1,\gamma_2) \in (0,1)$ such that for any $B^{+}_{2r} = (B_{2r}(x_0))^{+} \subset B^{+}_{\frac{3}{4}}$ with $x_0 \in T_{\frac{1}{2}}$ and $r>0$ satisfying
\begin{equation}
\label{condition_of_r_1}
r \leq \min \left\lbrace \left( \frac{\beta}{8[p(\cdot)]_{\beta}} \right)^{\frac{2}{\beta}}, \ \frac{1}{4} \left( \frac{\gamma_1^2}{(2n+\gamma_1)[p(\cdot)]_{\beta}} \right)^{\frac{1}{\beta}}, \ \frac{1}{8M} \right\rbrace,
\end{equation}
there holds $Du \in L^{(1+\sigma_0)p(\cdot)}(B^{+}_r)$.
Moreover, for any $\sigma \in [0,\sigma_0]$ we have
\begin{equation}
\label{higher_integrability_estimate}
\mint_{B^{+}_r} |Du|^{(1+\sigma)p(x)} \, dx \leq c \left( \mint_{B^{+}_{2r}} |Du|^{p(x)} \, dx \right)^{1+\sigma} + c \mint_{B^{+}_{2r}} |Dw|^{(1+\sigma)p(x)} \, dx + c,
\end{equation}
for some constant $c=c(n,\gamma_1,\gamma_2)>1$.
\end{theorem}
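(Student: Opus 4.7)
The plan is to establish a boundary reverse Hölder inequality for $|Du|^{p(\cdot)}$ on half-balls $B_{2r}^+$ centered on $T_{1/2}$ and then apply Gehring's lemma in its variable-exponent form. The comparison solution $w$ from Lemma \ref{Obs-Lemma}, which lies below $u$ and is $C^{1,\theta}$ up to $\overline{B_{3/4}^+}$, will serve as a smooth reference function.

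First I would derive a Caccioppoli-type inequality by testing the variational inequality (\ref{EL}) with $v = -\eta^{p_2}(u - w - \bar{c})$, where $\bar{c} = (u-w)_{B_{2r}^+}$, $\eta$ is a standard cutoff with $\eta \equiv 1$ on $B_r(x_0)$, $\mathrm{supp}\,\eta \subset B_{2r}(x_0)$, $|D\eta| \leq 2/r$, and $p_2 = \sup_{B_{2r}^+} p$. Admissibility requires $v = 0$ on $(\partial B_1)^+$ (from the support of $\eta$) and $v \geq -u$ on $T_1$; the latter follows by splitting cases on the sign of $u-w-\bar c$ and using $u \geq w \geq 0$ on $T_1$ together with $\bar c \geq 0$ (where $w \geq 0$ comes from a maximum-principle argument applied to (\ref{refer-prob})). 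Expanding $Dv$ and applying Young's inequality to absorb the cross terms $|Du|^{p(x)-2}Du \cdot Dw$ and $|Du|^{p(x)-1}|u-w-\bar c|\,|D\eta|$ should give
\begin{equation*}
\int_{B_r^+} |Du|^{p(x)}\,dx \leq c \int_{B_{2r}^+} |Dw|^{p(x)}\,dx + c \int_{B_{2r}^+} \left|\frac{u-w-\bar{c}}{r}\right|^{p(x)} dx.
\end{equation*}

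Next I would apply a Sobolev-Poincaré inequality to $u - w$ to control the last integral. Because the exponent is variable, I would set $p_1 = \inf_{B_{2r}^+} p$ and $p_2 = \sup_{B_{2r}^+} p$ and use the elementary dichotomy $y^{p(x)} \leq y^{p_1} + y^{p_2}$ for $y \geq 0$ to reduce to two constant-exponent applications of Lemma \ref{So-Po ineq}. The Hölder continuity of $p$ together with the first two restrictions on $r$ in (\ref{condition_of_r_1}) force $p_2 - p_1 \leq [p(\cdot)]_\beta (4r)^\beta$ to be small and keep auxiliary factors of the form $r^{p(x) - p_i}$ uniformly bounded, so the resulting bound can be reexpressed in terms of $p(x)$. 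Using $|D(u-w)| \leq |Du| + |Dw|$, this produces a boundary reverse Hölder inequality of the form
\begin{equation*}
\mint_{B_r^+} |Du|^{p(x)}\,dx \leq c \left( \mint_{B_{2r}^+} (|Du| + |Dw|)^{q\,p(x)}\,dx \right)^{1/q} + c \mint_{B_{2r}^+} |Dw|^{p(x)}\,dx + c,
\end{equation*}
with $q = n/(n+\gamma_1) < 1$.

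Finally, a variable-exponent Gehring lemma of the type used in \cite{Zh1, AMin, CM1} applied to this reverse Hölder inequality delivers the exponent $\sigma_0 = \sigma_0(n,\gamma_1,\gamma_2) > 0$ and the quantitative bound (\ref{higher_integrability_estimate}); the normalization $r \leq 1/(8M)$ from (\ref{condition_of_r_1}) enters at this final step to make the iteration uniform. The main obstacle is the Sobolev-Poincaré step: Lemma \ref{So-Po ineq} is stated for constant exponent, so the smallness of $p_2 - p_1$ at scale $r$ must be carefully exploited to pass between $p(x)$ and its extremes $p_1, p_2$ without losing control of the constants. Balancing the three scale conditions in (\ref{condition_of_r_1}) so that every exponent swap is quantitatively harmless, while keeping all constants depending only on $n, \gamma_1, \gamma_2$, is where the bulk of the technical work lies.
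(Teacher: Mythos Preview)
Your proposal is correct and follows essentially the same route as the paper: test the variational inequality with a cut-off multiple of $w-u$ shifted by its mean (the paper writes this as $\eta^{\gamma_2}(w-(w)_{B_{2r}^+}-u+(u)_{B_{2r}^+})$, which is exactly your $-\eta^{p_2}(u-w-\bar c)$ up to the harmless choice of cut-off exponent), derive a Caccioppoli inequality, convert the oscillation terms via Sobolev--Poincar\'e at the constant exponent $p_2$, use the H\"older continuity of $p(\cdot)$ and the smallness conditions in \eqref{condition_of_r_1} to pass between $p_1,p_2,p(x)$, and finish with Gehring's lemma. The only cosmetic differences are that the paper treats the $u$- and $w$-oscillations separately rather than through $u-w$, and lands on the intermediate sub-exponent $\tfrac12\bigl(\tfrac{n}{n+\gamma_1}+1\bigr)$ rather than $\tfrac{n}{n+\gamma_1}$; neither changes the argument.
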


\begin{proof}
Let $\eta \in C_0^{\infty}(B_{2r})$ be a cut-off function satisfying
\begin{equation}
\label{cut-off}
0 \leq \eta \leq 1, \quad \eta \equiv 1 \ \ \text{in} \ B_r \quad \text{and} \quad |D\eta| \leq \frac{c(n)}{r}.
\end{equation}
We observe from Lemma \ref{Obs-Lemma} that
\begin{equation*}
v := \eta^{\gamma_2} \left( w - (w)_{B^{+}_{2r}} - u + (u)_{B^{+}_{2r}} \right) \geq \eta^{\gamma_2}(w-u) = - \eta^{\gamma_2} u \geq -u \quad \mathrm{on} \ \ T_1.
\end{equation*}
Hence $v \in \mathcal{A}_0$, where $\mathcal{A}_0$ is the admissible set in (\ref{sub_adset}).
From Lemma \ref{EL-lemma}, we have
\begin{align*}
\int_{B^{+}_1} |Du|^{p(x)} \eta^{\gamma_2} \, dx & \leq \int_{B^{+}_1} \left( |Du|^{p(x)-2} Du \cdot Dw \right) \eta^{\gamma_2} \, dx \\
& \hspace{-0.6cm} + \gamma_2 \int_{B^{+}_1} \left( |Du|^{p(x)-2} Du \cdot D\eta \right) \eta^{\gamma_2-1} \left( w - (w)_{B^{+}_{2r}} - u + (u)_{B^{+}_{2r}} \right) dx.
\end{align*}
It follows from (\ref{cut-off}) and Young's inequality with $\varepsilon \in (0,1)$ that
\begin{align*}
\int_{B^{+}_{2r}} & |Du|^{p(x)} \eta^{\gamma_2} \, dx \\
& \leq \varepsilon \int_{B^{+}_{2r}} |Du|^{p(x)} \eta^{\gamma_2} \, dx + c(\varepsilon) \int_{B^{+}_{2r}} |Dw|^{p(x)} \eta^{\gamma_2} \, dx \\
& \quad + \varepsilon \int_{B^{+}_{2r}} |Du|^{p(x)} \eta^{\frac{(\gamma_2-1)p(x)}{p(x)-1}} \, dx + c(\varepsilon) \int_{B^{+}_{2r}} |w - (w)_{B^{+}_{2r}}|^{p(x)} |D\eta|^{p(x)} \, dx \\
& \quad + c(\varepsilon) \int_{B^{+}_{2r}} |u - (u)_{B^{+}_{2r}}|^{p(x)} |D\eta|^{p(x)} \, dx \\
& \leq 2\varepsilon \int_{B^{+}_{2r}} |Du|^{p(x)} \eta^{\gamma_2} \, dx + c(\varepsilon) \int_{B^{+}_{2r}} |Dw|^{p(x)} \eta^{\gamma_2} \, dx \\
& \quad + c(\varepsilon) \int_{B^{+}_{2r}} \left| \frac{w - (w)_{B^{+}_{2r}}}{r} \right|^{p(x)} dx + c(\varepsilon) \int_{B^{+}_{2r}} \left| \frac{u - (u)_{B^{+}_{2r}}}{r} \right|^{p(x)} dx,
\end{align*}
where we have used the fact that
\begin{equation*}
\frac{(\gamma_2-1)p(x)}{p(x)-1} = \frac{\gamma_2-1}{\gamma_2} \cdot \frac{p(x)}{p(x)-1} \cdot \gamma_2 \geq \gamma_2.
\end{equation*}
Taking $\varepsilon = \frac{1}{4}$ and using (\ref{cut-off}), we get
\begin{multline}
\mint_{B^{+}_r} |Du|^{p(x)} \, dx \leq c \mint_{B^{+}_{2r}} \left| \frac{u - (u)_{B^{+}_{2r}}}{r} \right|^{p(x)} dx \\
+ c \mint_{B^{+}_{2r}} \left| \frac{w - (w)_{B^{+}_{2r}}}{r} \right|^{p(x)} dx + c \mint_{B^{+}_{2r}} |Dw|^{p(x)} \, dx
\end{multline}
for some positive constant $c=c(n,\gamma_1,\gamma_2)$.

Let $\displaystyle p_1=\inf_{B^{+}_{2r}}p(\cdot)$ and $\displaystyle p_2=\sup_{B^{+}_{2r}}p(\cdot)$.
Since $p(\cdot) \in C^{0,\beta}(\overline{B^{+}_1})$, we have
$$p_2-p_1 \le [p(\cdot)]_\beta(2r)^\beta,$$
and then we obtain from the condition \eqref{condition_of_r_1} that
$$\frac{p_1}{p_2} \geq 1-\frac{[p(\cdot)]_\beta(2r)^\beta}{\gamma_1} \geq \frac{\frac{2n}{n+\gamma_1}}{\frac{n}{n+\gamma_1}+1}>\frac{n}{n+\gamma_1}.$$
Thus, by Lemma \ref{So-Po ineq} and H\"{o}lder's inequality for
\begin{equation*}
\frac{1}{\frac{n}{n+\gamma_1}}\frac{p_1}{p_2}\frac{\frac{n}{n+\gamma_1}+1}{2}>1 \quad \text{and} \quad \frac{1}{\frac{n}{n+\gamma_1}}\frac{p_1}{p_2}>1,
\end{equation*}
we have
\begin{align*}
\mint_{B^{+}_{2r}} \left| \frac{u - (u)_{B^{+}_{2r}}}{r} \right|^{p(x)} dx&\le \mint_{B^{+}_{2r}} \left| \frac{u - (u)_{B^{+}_{2r}}}{r} \right|^{p_2} dx+1\\
&\le c\left( \mint_{B^{+}_{2r}} \left|Du\right|^{\frac{np_2}{n+\gamma_1}} dx\right) ^{\frac{n+\gamma_1}{n}}+1\\
&\le c\left( \mint_{B^{+}_{2r}} \left|Du\right|^{p_1 \frac{\frac{n}{n+\gamma_1}+1}{2}} dx\right) ^{\frac{p_2}{p_1} \frac{2}{\frac{n}{n+\gamma_1}+1}}+1\\
&\le c\left( \mint_{B^{+}_{2r}} \left|Du\right|^{p(x) \frac{\frac{n}{n+\gamma_1}+1}{2}} dx\right) ^{\frac{p_2}{p_1} \frac{2}{\frac{n}{n+\gamma_1}+1}}+c
\end{align*}
and
\begin{align*}
\mint_{B^{+}_{2r}} \left| \frac{w - (w)_{B^{+}_{2r}}}{r} \right|^{p(x)} dx & \leq \mint_{B^{+}_{2r}} \left| \frac{w - (w)_{B^{+}_{2r}}}{r} \right|^{p_2} dx + 1 \\
& \leq c \left( \mint_{B^{+}_{2r}} \left|Dw\right|^{\frac{np_2}{n+\gamma_1}} dx\right) ^{\frac{n+\gamma_1}{n}} + 1 \\
& \leq c \left( \mint_{B^{+}_{2r}} \left|Dw\right|^{p_1} dx\right) ^{\frac{p_2}{p_1}} + 1 \\
& \leq c\left( \mint_{B^{+}_{2r}} \left|Dw\right|^{p(x)} dx\right) ^{\frac{p_2}{p_1}} + c.
\end{align*}

On the other hand, the condition \eqref{condition_of_r_1} implies that
\begin{equation*}
(p_2-p_1)\log\left(\frac{1}{r}\right)\le [p(\cdot)]_\beta(2r)^\beta\frac{2}{\beta}\left(\frac{1}{r}\right)^{\frac{\beta}{2}} \le 1.
\end{equation*}
This yields
\begin{equation}\label{du}
\left( \mint_{B_{2r}^+}|Du|^{p(x)} \, dx \right)^{p_2-p_1}\le c\left(\frac{M}{r^n}\right)^{p_2-p_1}\le c\left(\frac{1}{r^{n+1}}\right)^{p_2-p_1}\le c
\end{equation}
and
\begin{equation}\label{dw}
\left( \mint_{B_{2r}^+}|Dw|^{p(x)} \, dx \right)^{p_2-p_1}\le c.
\end{equation}
Hence, we obtain from H\"{o}lder's inequality that
\begin{align*}
\left(\mint_{B^{+}_{2r}} |Du|^{p(x)\frac{\frac{n}{n+\gamma_1}+1}{2}} \, dx \right)^{\frac{p_2}{p_1} \frac{2}{\frac{n}{n+\gamma_1}+1}} & \leq \left( \mint_{B_{2r}^+}|Du|^{p(x)} \, dx \right)^{\frac{p_2}{p_1}} \\
& = \left( \mint_{B_{2r}^+}|Du|^{p(x)} \,dx \right)^{\frac{p_2-p_1}{p_1}} \mint_{B_{2r}^+}|Du|^{p(x)} \, dx \\
& \leq c \mint_{B_{2r}^+}|Du|^{p(x)} \, dx
\end{align*}
and that
\begin{align*}
\left( \mint_{B_{2r}^+}|Dw|^{p(x)} \, dx \right)^{\frac{p_2}{p_1}} & = \left( \mint_{B_{2r}^+}|Dw|^{p(x)} \,dx \right)^{\frac{p_2-p_1}{p_1}} \mint_{B_{2r}^+}|Dw|^{p(x)} \, dx \\
& \leq c \mint_{B_{2r}^+}|Dw|^{p(x)} \, dx.
\end{align*}

Consequently, we get
\begin{equation*}
\mint_{B^{+}_r}|Du|^{p(x)} \, dx \leq c \left(\mint_{B^{+}_{2r}} |Du|^{p(x)\frac{\frac{n}{n+\gamma_1}+1}{2}} \, dx \right)^\frac{2}{\frac{n}{n+\gamma_1}+1} + c \mint_{B^{+}_{2r}} |Dw|^{p(x)} \, dx + c.
\end{equation*}
Since $Dw \in L^{\infty}(B^{+}_{\frac{3}{4}})$, we have the desired inequality \eqref{higher_integrability_estimate} by using Gehring's lemma (see \cite[Theorem 6.6]{Giu}).
\end{proof}

\section{\bf H\"{o}lder continuity of the gradient}
\label{sec4}

Under the same assumptions and conclusions as in Theorem \ref{Higher integrability}, we further investigate a finer regularity for the problem (\ref{main_ftnl})-(\ref{main_adset}).
Especially, we will prove the H\"{o}lder continuity for the gradient of a minimizer of the $p(x)$-energy functional (\ref{main_ftnl}) over the admissible set (\ref{main_adset}).

We recall that $B^{+}_{2r} = (B_{2r}(x_0))^{+} \subset B^{+}_{\frac{3}{4}}$ with $x_0 \in T_{\frac{1}{2}}$, where $r \in (0,\frac{1}{8})$ satisfies (\ref{condition_of_r_1}).
We set
\begin{equation*}
p_1 := \inf_{B^{+}_r} p(\cdot), \quad p_2 := \sup_{B^{+}_r} p(\cdot) \quad \text{and} \quad \Norm{Dw}_{\infty} := \Norm{Dw}_{L^{\infty}(B^{+}_{\frac{3}{4}})}.
\end{equation*}
We further assume on $r$ for which
\begin{equation}
\label{condition_of_sigma_0}
p_2 - p_1 \leq [p(\cdot)]_{\beta} (2r)^{\beta} \leq \frac{\sigma_0}{2} < \frac{1}{2}.
\end{equation}
Then we get
\begin{align}
\nonumber p_2 = p(x) + p_2 - p(x) & \leq p(x) + p_2 - p_1 \\
\label{ineq_p2} & \leq p(x) \left( 1 + p_2 - p_1 \right) \leq p(x) \left( 1 + \frac{\sigma_0}{2} \right).
\end{align}
According to (\ref{higher_integrability_estimate}) with $\sigma = \frac{\sigma_0}{2}$, we have $Du \in L^{p_2}(B^{+}_r)$.
Moreover, it follows from (\ref{higher_integrability_estimate}), (\ref{du}) and (\ref{ineq_p2}) that
\begin{align*}
\mint_{B^{+}_r} |Du|^{p_2} \, dx & \leq \mint_{B^{+}_r} |Du|^{p(x)(1+p_2-p_1)} \, dx + 1 \\
&\le c \left( \mint_{B^{+}_{2r}} |Du|^{p(x)} \, dx \right)^{1+p_2-p_1} + c \mint_{B^{+}_{2r}} |Dw|^{(1+p_2-p_1)p(x)} \, dx + c \\
& \leq c \mint_{B^{+}_{2r}} |Du|^{p(x)} \, dx + c \left( \Norm{Dw}_{\infty}^{\left( 1 + \frac{1}{2} \right) \gamma_2} + 1 \right) + c \\
& \leq c \mint_{B^{+}_{2r}} |Du|^{p(x)} \, dx + c \Norm{Dw}_{\infty}^{\frac{3\gamma_2}{2}} + c.
\end{align*}

We now consider a minimizer $u_0 \in W^{1,p_2}(B^{+}_r)$ of the functional
\begin{equation*}
\mathcal{F}_{p_2}(v) = \frac{1}{p_2} \int_{B^{+}_r} |Dv|^{p_2} \, dx
\end{equation*}
over the convex admissible set 
\begin{equation*}
\mathcal{B} = \left\lbrace v \in W^{1,p_2}(B^{+}_r) : v=u \ \ \mathrm{on} \ (\partial B_r)^{+} \ \ \mathrm{and} \ \ v \geq 0 \ \ \mathrm{on} \ T_r \right\rbrace.
\end{equation*}
We apply Lemma \ref{EL-lemma} when $p(x)$, $g$, $u$, $B^{+}_1$ are replaced by $p_2$, $u$, $u_0$, $B^{+}_r$, respectively, to discover that
\begin{equation}
\label{reference_EL}
\int_{B^{+}_r} |Du_0|^{p_2-2} Du_0 \cdot Dv \, dx \geq 0, \quad \forall v \in \mathcal{B}_0,
\end{equation}
where
\begin{equation}
\label{reference_sub_adset}
\mathcal{B}_0 = \left\lbrace v \in W^{1,p_2}(B^{+}_r) : v=0 \ \ \mathrm{on} \ \ (\partial B_r)^{+} \ \ \mathrm{and} \ \ v \geq -u_0 \ \ \mathrm{on} \ \ T_r \right\rbrace.
\end{equation}
Since $u\in W^{1,p_2}(B_r^+)$, we see that $u \in \mathcal{B}$.
Also, by the definition of $u_0$, we have
\begin{equation}
\label{DugeDu0}
\int_{B_r^+} |Du_0|^{p_2} \, dx \leq \int_{B_r^+} |Du|^{p_2} \,dx.
\end{equation}

Then we obtain the following comparison estimates.

\begin{lemma}
[Comparison estimates]
\label{Comparion_est_lemma}
With (\ref{condition_of_r_1}) and (\ref{condition_of_sigma_0}), we further assume that
\begin{equation}\label{condition_of_sigma_1}
r \leq \frac{1}{[p(\cdot)]_{\beta}} \left( \frac{\sigma_1}{4} \right)^{\frac{1}{\beta}}, \quad \mathrm{where} \ \ \sigma_1 := \min \left\lbrace \frac{\beta}{4n}, \ \sigma_0 \right\rbrace.
\end{equation}
Then we have
\begin{equation}\label{Du-Du_0 p_2}
\int_{B^{+}_r} |Du-Du_0|^{p_2} \, dx \leq cr^{\frac{\beta}{4}} \left( M^{\sigma_1} \int_{B^{+}_{2r}} |Du|^{p_2} \, dx + r^n \right),
\end{equation}
where $M$ is given as (\ref{def_of_M}), for some constant $c=c(n,\gamma_1,\gamma_2,[p(\cdot)]_{\beta}, \Norm{Dw}_{\infty})>1$.
\end{lemma}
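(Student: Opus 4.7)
The plan is to combine the variational inequalities (\ref{EL}) and (\ref{reference_EL}) via suitably chosen test functions, and then carefully estimate the ``frozen-exponent'' error caused by switching from $p(x)$ to $p_2$. First I would verify that $\varphi := u_0 - u$, extended by $0$ from $B_r^+$ to $B_1^+$, lies in $\mathcal{A}_0$: on $T_r$ one has $\varphi + u = u_0 \ge 0$ since $u_0 \in \mathcal{B}$, while on $T_1 \setminus T_r$ the extension vanishes and $0 \ge -u$ holds since $u \ge 0$; vanishing on $(\partial B_1)^+$ is automatic because $x_0 \in T_{1/2}$ and $r < 1/8$ force $B_r^+(x_0)$ to lie well inside $B_1^+$. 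Similarly, $u - u_0$ belongs to $\mathcal{B}_0$ since $(u - u_0) + u_0 = u \ge 0$ on $T_r$. Testing (\ref{EL}) with $\varphi$ and (\ref{reference_EL}) with $u - u_0$, and summing, gives
\begin{equation*}
\int_{B_r^+} \bigl(|Du|^{p(x)-2} Du - |Du_0|^{p_2-2} Du_0\bigr) \cdot (Du - Du_0) \, dx \le 0.
\end{equation*}

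Next I would exploit the algebraic identity
\begin{equation*}
|Du|^{p_2-2}Du - |Du_0|^{p_2-2}Du_0 = \bigl(|Du|^{p(x)-2}Du - |Du_0|^{p_2-2}Du_0\bigr) + \bigl(|Du|^{p_2-2} - |Du|^{p(x)-2}\bigr) Du,
\end{equation*}
take its dot product with $Du - Du_0$, integrate, and combine with the previous bound to obtain
\begin{equation*}
\int_{B_r^+}\bigl(|Du|^{p_2-2}Du - |Du_0|^{p_2-2}Du_0\bigr)\cdot(Du-Du_0)\,dx \le \int_{B_r^+}\bigl(|Du|^{p_2-2} - |Du|^{p(x)-2}\bigr)Du\cdot(Du-Du_0)\,dx.
\end{equation*}
To the left-hand side I would apply the monotonicity inequality for the constant exponent $p_2$, directly when $p_2 \ge 2$ and through (\ref{monotonicity}) when $1 < p_2 < 2$; in the latter case the auxiliary term $c\varepsilon\int(|Du|^{p_2}+|Du_0|^{p_2})$ is absorbed using the energy comparison (\ref{DugeDu0}) and $\int_{B_r^+}|Du|^{p_2}\le\int_{B_{2r}^+}|Du|^{p_2}$. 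This produces a genuine lower bound of the form $c\int|Du-Du_0|^{p_2}$ up to an absorbable term of order $\varepsilon\int_{B_{2r}^+}|Du|^{p_2}$.

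The technical core is estimating the right-hand side. Rewriting $(|Du|^{p_2-2} - |Du|^{p(x)-2})Du$ as $|Du|^{p(x)-1}(|Du|^{p_2-p(x)}-1)Du/|Du|$ and separating $\{|Du|\ge 1\}$ from $\{|Du|<1\}$, I would use $t^s-1\le s\, t^s\log t$ for $t\ge 1$ and $1-t^s\le s|\log t|$ for $t<1$, together with $\log t\le t^\epsilon/\epsilon$ for $t\ge 1$ and the boundedness of $t^{\gamma_1-1}|\log t|$ on $(0,1)$, to derive
\begin{equation*}
|Du|^{p(x)-1}\bigl||Du|^{p_2-p(x)}-1\bigr| \le c(p_2-p_1)\bigl(|Du|^{p_2-1+\epsilon}+1\bigr)
\end{equation*}
for an arbitrarily small fixed $\epsilon>0$. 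After multiplying by $|Du-Du_0|$, H\"{o}lder's and Young's inequalities let me absorb a small multiple of $\int|Du-Du_0|^{p_2}$ into the left-hand side and leave a remainder of the form $c(p_2-p_1)^{p_2/(p_2-1)}\int_{B_r^+}(|Du|^{p_2+\delta_0}+1)\,dx$ with $\delta_0 := \epsilon p_2/(p_2-1)$. Choosing $\epsilon$ so small that $p_2+\delta_0\le (1+\sigma_0)p_1$ and interpolating between $L^{p_2}(B_r^+)$ and $L^{(1+\sigma_0)p(\cdot)}(B_r^+)$ --- the latter controlled by Theorem \ref{Higher integrability} together with (\ref{du}) and the uniform bound $\Norm{Dw}_\infty\le c$ --- recasts this remainder in the advertised shape $c\,r^{\beta/4}(M^{\sigma_1}\int_{B_{2r}^+}|Du|^{p_2}+r^n)$.

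The \textbf{main obstacle} is this final bookkeeping. The positive power $(p_2-p_1)^{p_2/(p_2-1)}\lesssim r^{\beta p_2/(p_2-1)}$ must survive the negative power $r^{-n\sigma_1}$ introduced by the higher-integrability interpolation over the volume $|B_r^+|\sim r^n$, and must do so uniformly across $p_2\in[\gamma_1,\gamma_2]$, while keeping the power of $M$ bounded by $\sigma_1$. The threshold $\sigma_1 = \min\{\beta/(4n),\sigma_0\}$ in (\ref{condition_of_sigma_1}), together with the restrictions on $r$ in (\ref{condition_of_r_1}) and (\ref{condition_of_sigma_0}), is engineered precisely so that the net exponent of $r$ is at least $\beta/4$ and Theorem \ref{Higher integrability} may be invoked at the chosen interpolating exponent.
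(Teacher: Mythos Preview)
Your proposal is correct and follows essentially the same route as the paper: the same admissible test functions are chosen, the same decomposition $\mathrm{I}\le\mathrm{II}$ is obtained, and the frozen-exponent error $\mathrm{II}$ is controlled through the same log-bounds and the higher integrability from Theorem~\ref{Higher integrability}. Two minor tactical differences: the paper applies Young's inequality to $\mathrm{II}$ \emph{without} a parameter and handles the resulting $\int|Du-Du_0|^{p_2}$ via the energy comparison (\ref{DugeDu0}), whereas you absorb it into the left using Young with a small parameter; and the paper closes by explicitly taking $\varepsilon=r^{(\beta-n\sigma_1)/2}$ to balance the term $c\varepsilon\int_{B_{2r}^+}|Du|^{p_2}$ against $c\varepsilon^{-1}\mathrm{II}$, a step your ``main obstacle'' paragraph gestures at but does not spell out (you should make this choice explicit in the $p_2<2$ case, since otherwise that $\varepsilon$-term carries no factor of $r$).
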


\begin{proof}
Let us first observe that $u-u_0 \in \mathcal{B}_0$, where $\mathcal{B}_0$ is the admissible set in (\ref{reference_sub_adset}).
Moreover, by putting $u_0=u$ in $B^{+}_1 \setminus B^{+}_r$, we see that $u_0-u \in \mathcal{A}_0$, where $\mathcal{A}_0$ is the admissible set in (\ref{sub_adset}).
Therefore, we obtain from (\ref{EL}) and (\ref{reference_EL}) that
\begin{equation}
\label{comp1}
\int_{B^{+}_r} |Du_0|^{p_2-2} Du_0 \cdot (Du-Du_0) \, dx \geq 0
\end{equation}
and
\begin{equation}
\label{comp2}
\int_{B^{+}_r} |Du|^{p(x)-2} Du \cdot (Du_0-Du) \, dx \geq 0.
\end{equation}
Combining (\ref{comp1}) and (\ref{comp2}) yields
\begin{align*}
\RN{1} & := \int_{B^{+}_r} \left( |Du|^{p_2-2} Du - |Du_0|^{p_2-2} Du_0 \right) \cdot (Du-Du_0) \, dx \\
& \leq \int_{B^{+}_r} |Du|^{p_2-2} Du \cdot (Du-Du_0) \, dx \\
& \leq \int_{B^{+}_r} \left( |Du|^{p_2-2} Du - |Du|^{p(x)-2} Du \right) \cdot (Du-Du_0) \, dx =: \RN{2}.
\end{align*}
By \eqref{monotonicity} and \eqref{DugeDu0}, we get
\begin{equation}\label{Du-Du_0}
\int_{B_r^+}|Du-Du_0|^{p_2}dx \le c\varepsilon \int_{B_r^+} |Du|^{p_2} + c \varepsilon^{-1} \RN{1}
\end{equation}
for any $\varepsilon\in (0,1)$.

From the mean value theorem for the map $t \to |Du|^{t(p_2-p(x))}$, for $x\in B_r$, there exists $t_x \in (0,1)$ such that
\begin{multline}\label{DuDu}
\left||Du|^{p_2-2}Du-|Du|^{p(x)-2}Du\right| = \left|\left(|Du|^{p_2-p(x)}-1\right)|Du|^{p(x)-2}Du\right|\\
\le \left(p_2-p(x)\right)|Du|^{t_x(p_2-p(x))} \left|\log |Du| \right| |Du|^{p(x)-1}.
\end{multline}
By using $t^{\gamma_1-1
}|\log t|\le c(\gamma_1)$ for $0<t\le 1$, and $\log t\le c(\sigma) t^\sigma$ for $t>1$ with $\sigma>0$, 
we have 
$$|Du|^{t_x(p_2-p(x))} \left|\log |Du| \right| |Du|^{p(x)-1} \le c(\gamma_1),$$
for $|Du|\le 1$, while
$$|Du|^{t_x(p_2-p(x))} \left|\log |Du| \right| |Du|^{p(x)-1} \le c(\sigma) |Du|^\sigma|Du|^{p_2-1},$$
for $|Du| > 1.$
Thus, we get
\begin{multline}\label{aa}
\left(p_2-p(x)\right)|Du|^{t_x(p_2-p(x))} \left|\log |Du| \right| |Du|^{p(x)-1} \\
\le c \left(p_2-p_1\right)\left(|Du|^{\sigma_2}|Du|^{p_2-1}+1 \right),
\end{multline}
where $\sigma_2:=\frac{(\gamma_1-1)\sigma_1}{2\gamma_1}$.

By the definition of $\RN{2}$, \eqref{DuDu}, \eqref{aa}, \eqref{DugeDu0} and \eqref{condition_of_sigma_0}, we have
\begin{align*}
|\RN{2}|&\le c\left(p_2-p_1\right)\int_{B_r^+} \left( |Du-Du_0|+|Du|^{\sigma_2}|Du|^{p_2 -1}|Du-Du_0| \right) dx\\
&\le cr^\beta \int_{B_r^+} \left( \frac{|Du-Du_0|^{p_2}}{p_2}+\frac{|Du|^{\sigma_2\frac{p_2}{p_2 -1}}|Du|^{p_2}}{\frac{p_2}{p_2-1}}+1 \right) dx\\
&\le cr^\beta \int_{B_r^+} \left( |Du|^{p_2}+|Du|^{\sigma_2\frac{p_2}{p_2-1}}|Du|^{p_2} +1 \right) dx\\
&\le cr^\beta \int_{B_r^+} \left( |Du|^{p(x)+[p(\cdot)]_\beta(2r)^\beta+ \sigma_2\frac{\gamma_1}{\gamma_1-1}}+1 \right) dx\\
&\le cr^\beta \int_{B_r^+} \left( |Du|^{(1+\sigma_1)p(x)}+1 \right) dx,
\end{align*}
for some constant $c=c(n,\gamma_1,\gamma_2,[p(\cdot)]_{\beta})>1$. Moreover, by using Theorem \ref{Higher integrability}, we have
\begin{align*}
|\RN{2}|&\le cr^\beta \left\{ r^n \left( \mint_{B^{+}_{2r}} |Du|^{p(x)} \, dx \right)^{1+\sigma_1} +  \int_{B^{+}_{2r}} \left( |Dw|^{(1+\sigma_1)p(x)} +1 \right) dx  \right\}\\
&\le cr^\beta \left\{ r^{-n\sigma_1} M^{\sigma_1} \int_{B^{+}_{2r}} |Du|^{p_2} \, dx + r^n \right\}, \\
\end{align*}
for some constant $c=c(n,\gamma_1,\gamma_2,[p(\cdot)]_{\beta},\Norm{Dw}_{\infty})>1$.

Therefore, we obtain from \eqref{Du-Du_0}, $\RN{1} \le \RN{2}$ and the estimate for $\RN{2}$ that
\begin{align*}
\int_{B^{+}_r} |Du-Du_0|^{p_2} \, dx \leq & c\varepsilon \int_{B_r^+} |Du|^{p_2} \, dx \\
& \quad + c \varepsilon^{-1}r^\beta \left ( r^{-n\sigma_1}M^{\sigma_1} \int_{B^{+}_{2r}} |Du|^{p_2} \, dx + r^n \right) \\
\le &cr^{\frac{\beta}{4}} \left( M^{\sigma_1} \int_{B^{+}_{2r}} |Du|^{p_2} \, dx + r^n \right), 
\end{align*}
by taking $\varepsilon=r^{\frac{\beta-n\sigma_1}{2}}$ and using the fact that $\frac{\beta-n\sigma_1}{2} \geq \frac{\beta}{4}$.
\end{proof}

We now provide the results of H\"{o}lder regularity for the gradient of a minimizer of the thin obstacle problem for the $p$-Laplacian.

\begin{lemma}
Let $0 < r \leq 1$ and let $1 < \gamma_1 \leq p \leq \gamma_2 < \infty$ be fixed.
For a minimizer $u_0 \in W^{1,p}(B^{+}_r)$ of the functional
\begin{equation}
v \mapsto \mathcal{F}_p(v) = \frac{1}{p} \int_{B^{+}_r} |Dv|^p \, dx
\end{equation}
over the admissible set 
\begin{equation*}
\mathcal{A} = \left\lbrace v \in W^{1,p}(B^{+}_r) : v=g \ \ \mathrm{on} \ (\partial B_r)^{+} \ \ \mathrm{and} \ \ v \geq 0 \ \ \mathrm{on} \ T_r \right\rbrace
\end{equation*}
with $g \in W^{1,p}(B^{+}_r)$, then $u_0 \in C^{1,\alpha_0}(\overline{B^{+}_{\frac{r}{8}}})$ for some $\alpha_0 = \alpha_0(n,\gamma_1,\gamma_2) \in (0,1)$.
Moreover, there exists a constant $c=c(n,\gamma_1,\gamma_2)>1$ such that for any $0 < \rho < \frac{r}{8}$,
\begin{equation}
\label{aux_lem_r1}
\mint_{B^{+}_{\rho}} |Du_0-(Du_0)_{B^{+}_{\rho}}|^p \, dx \leq c \left( \frac{\rho}{r} \right)^{\alpha_0} \mint_{B^{+}_r} |Du_0|^p \, dx
\end{equation}
and
\begin{equation}\label{aux_lem_r2}
\mint_{B^{+}_{\rho}} |Du_0|^p \, dx \leq c \mint_{B^{+}_r} |Du_0|^p \, dx.
\end{equation}
\end{lemma}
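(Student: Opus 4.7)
The plan is to derive both decay estimates as consequences of the $C^{1,\alpha_*}$-regularity for minimizers of the thin obstacle problem for the constant-exponent $p$-Laplacian (the main result of Andersson--Mikayelyan \cite{AMik}), combined with a standard normalization-by-scaling argument. The rescaling produces uniform bounds in the unit ball, which I will then translate into the Campanato-type quantitative estimates on $B^+_r$.

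First I would set $\lambda := \left(\mint_{B^+_r}|Du_0|^p\,dx\right)^{1/p}$. If $\lambda=0$, then $u_0$ is constant and both inequalities are trivial, so we assume $\lambda>0$ and introduce the rescaled function $\tilde{u}_0(y) := (\lambda r)^{-1} u_0(ry)$ for $y\in B^+_1$. A direct change of variables shows that $\tilde{u}_0$ is itself a minimizer of $\int_{B^+_1}|D\tilde{v}|^p\,dy$ over the admissible set of $W^{1,p}(B^+_1)$-functions sharing its trace on $(\partial B_1)^+$ and satisfying the thin obstacle condition $\tilde{v}\geq 0$ on $T_1$, with the normalization $\mint_{B^+_1}|D\tilde{u}_0|^p\,dy = 1$. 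Applying \cite{AMik} to $\tilde{u}_0$ then yields the existence of an exponent $\alpha_* = \alpha_*(n,\gamma_1,\gamma_2)\in(0,1)$ and a constant $c = c(n,\gamma_1,\gamma_2)>1$ with
\begin{equation*}
\|D\tilde{u}_0\|_{L^\infty(B^+_{1/8})} + [D\tilde{u}_0]_{C^{0,\alpha_*}(\overline{B^+_{1/8}})} \leq c.
\end{equation*}

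Scaling back via $Du_0(x) = \lambda\, D\tilde{u}_0(x/r)$ then yields the two working bounds
\begin{equation*}
\|Du_0\|_{L^\infty(B^+_{r/8})} \leq c\lambda, \qquad [Du_0]_{C^{0,\alpha_*}(\overline{B^+_{r/8}})} \leq c\lambda r^{-\alpha_*}.
\end{equation*}
From the first, estimate \eqref{aux_lem_r2} is immediate, since for every $0<\rho<r/8$,
\begin{equation*}
\mint_{B^+_\rho}|Du_0|^p\,dx \leq \|Du_0\|_{L^\infty(B^+_{r/8})}^p \leq c\lambda^p = c\mint_{B^+_r}|Du_0|^p\,dx.
\end{equation*}
For \eqref{aux_lem_r1}, I would use the pointwise Hölder bound $|Du_0(x)-Du_0(x')|\leq [Du_0]_{C^{0,\alpha_*}}|x-x'|^{\alpha_*}$, which implies, after comparing with the mean value,
\begin{equation*}
\mint_{B^+_\rho}|Du_0-(Du_0)_{B^+_\rho}|^p\,dx \leq c\rho^{p\alpha_*}[Du_0]_{C^{0,\alpha_*}(\overline{B^+_{r/8}})}^p \leq c\left(\frac{\rho}{r}\right)^{p\alpha_*}\mint_{B^+_r}|Du_0|^p\,dx,
\end{equation*}
so the claimed decay is obtained with $\alpha_0 := p\alpha_* \in (0,1)$ (after truncation to ensure $\alpha_0<1$ if necessary).

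The only non-routine ingredient in this plan is the $C^{1,\alpha_*}$-regularity itself, for which I invoke \cite{AMik}; once that is in hand, the rescaling and the passage from $L^\infty$ and Hölder bounds to the Campanato-type decay are entirely standard. The mild technical point to verify carefully is the scale invariance of the thin obstacle admissible class under $y\mapsto u_0(ry)/(\lambda r)$: the homogeneous constraint $u_0\geq 0$ on $T_r$ is preserved because $\lambda r>0$, so $\tilde{u}_0\geq 0$ on $T_1$, and the boundary trace of the minimizer is also invariant under the scaling, so the rescaled problem has the same structural form and the cited regularity theorem applies uniformly in $p\in[\gamma_1,\gamma_2]$.
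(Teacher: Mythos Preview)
Your strategy---rescale, invoke the $C^{1,\alpha}$ theory of \cite{AMik} on the unit half-ball, then read off the Campanato decay and the local sup bound---is exactly the one the paper follows (the paper rescales only by $r$, not also by $\lambda$, but this is cosmetic).

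There is one place where you are assuming more than the paper is willing to attribute to \cite{AMik}. You assert that for the normalized minimizer $\tilde u_0$ (with $\mint_{B^+_1}|D\tilde u_0|^p=1$) the result of \cite{AMik} directly yields $\|D\tilde u_0\|_{L^\infty(B^+_{1/8})}+[D\tilde u_0]_{C^{0,\alpha_*}}\le c$. The paper's reading of \cite{AMik} is more cautious: what one extracts from the proof of Theorem~4.3 there is an estimate of the form
\[
|Du_0(x_1)-Du_0(x_2)|\le c\Big(\sup_{B^+_{1/4}}|D'u_0|\Big)\,|x_1-x_2|^\alpha,
\]
so the H\"older seminorm is controlled by the \emph{tangential} gradient sup norm, not by the $L^p$ energy. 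The paper then supplies the missing link $\sup_{B^+_{1/4}}|Du_0|^p\le c\int_{B^+_1}|Du_0|^p\,dx$ separately, by showing that $v_0:=|Du_0|^p$ is a subsolution of a uniformly elliptic equation in divergence form (after an even reflection across $T_1$) and running Moser iteration. Your argument is complete once this sup--$L^p$ gradient bound is available, but as written you have folded it into the citation of \cite{AMik}; you should either verify that \cite{AMik} really states the quantitative estimate in the form you need, or insert this subsolution/Moser step explicitly.
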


\begin{proof}
We first observe that it is sufficient to prove the lemma only for the case $r=1$ by scaling.
Indeed, if we set $\displaystyle \widehat{u_0}(x) := \frac{1}{r}	u_0(rx)$ and $\displaystyle \widehat{g}(x) := \frac{1}{r} g(rx)$ for $x \in B^{+}_1$, then $D\widehat{u_0}(x) = Du_0(rx)$ for all $x \in B^{+}_1$, and hence $\widehat{u_0}$ is a minimizer of the functional
\begin{equation*}
v \mapsto \frac{1}{p} \int_{B^{+}_1} |Dv|^p \, dx
\end{equation*}
over the admissible set
\begin{equation*}
\widehat{\mathcal{A}} = \left\lbrace v \in W^{1,p}(B^{+}_1) : v=\widehat{g} \ \ \mathrm{on} \ (\partial B_1)^{+} \ \ \mathrm{and} \ \ v \geq 0 \ \ \mathrm{on} \ T_1 \right\rbrace.
\end{equation*}
Then we obtain that $\widehat{u_0} \in C^{1,\alpha_0}(\overline{B^{+}_{\frac{1}{8}}})$ for some $\alpha_0 = \alpha_0(n,\gamma_1,\gamma_2) \in (0,1)$ and that for any $0 < \rho < \frac{1}{8}$,
\begin{equation*}
\mint_{B^{+}_{\rho}} |D\widehat{u_0}-(D\widehat{u_0})_{B^{+}_{\rho}}|^p \, dx \leq c \rho^{\alpha_0} \mint_{B^{+}_1} |D\widehat{u_0}|^p \, dx
\end{equation*}
and
\begin{equation*}
\mint_{B^{+}_{\rho}} |D\widehat{u_0}|^p \, dx \leq c \mint_{B^{+}_1} |D\widehat{u_0}|^p \, dx,
\end{equation*}
where $c=c(n,\gamma_1,\gamma_2)>1$.
After scaling back, we conclude that $u_0 \in C^{1,\alpha_0}(\overline{B^{+}_{\frac{r}{8}}})$ and that for any $0 < \rho < \frac{r}{8}$,
\begin{equation*}
\mint_{B^{+}_{\rho}} |Du_0-(Du_0)_{B^{+}_{\rho}}|^p \, dx \leq c \rho^{\alpha_0} \mint_{B^{+}_r} |Du_0|^p \, dx \leq c \left( \frac{\rho}{r} \right)^{\alpha_0} \mint_{B^{+}_r} |Du_0|^p \, dx
\end{equation*}
and
\begin{equation*}
\mint_{B^{+}_{\rho}} |Du_0|^p \, dx \leq c \mint_{B^{+}_r} |Du_0|^p \, dx.
\end{equation*}

We now prove the lemma for the case $r=1$.
From the proof of Theorem 4.3 in \cite{AMik}, we can obtain that 
\begin{equation}
\label{aux_lem_pf_0}
\sup_{B^{+}_{\rho}} |Du_0| \leq c \left( \sup_{B^{+}_{\frac{1}{8}}} |D'u_0| \right) \rho^{\alpha}, \quad \forall \rho \in \left( 0, \frac{1}{8} \right),
\end{equation}
for some $\alpha = \alpha(n,p) \in (0,1)$ and $c=c(n,p)>1$, where $D'u_0 = (D_1 u_0, \cdots, D_{n-1} u_0)$.
Then it follows from a classical renormalization argument (see \cite{AMik}) that
\begin{equation}
\label{aux_lem_pf_1}
|Du_0(x_1)-Du_0(x_2)| \leq c \left( \sup_{B^{+}_{\frac{1}{4}}} |D'u_0| \right) |x_1-x_2|^{\alpha}, \quad \forall x_1, x_2 \in B^{+}_{\frac{1}{8}}.
\end{equation}
On the other hand, putting $v_0=|Du_0|^p$, we can deduce from \cite{AF, DSV} that for any $\eta \in C_0^{\infty}(B^{+}_{\frac{1}{2}})$ with $\eta \geq 0$,
\begin{equation*}
\int_{B^{+}_{\frac{1}{2}}} a_{ij}(x) D_i v_0 D_j \eta \, dx \leq -c \int_{B^{+}_{\frac{1}{2}}} \left| D (|Du|^{\frac{p-2}{2}}Du) \right|^2 \eta \, dx \leq 0,
\end{equation*}
for some positive constant $c=c(n,p)$, where
\begin{equation*}
a_{ij}(x) := \delta_{ij} + (p-2)\frac{D_i u(x) D_j u(x)}{|Du(x)|^2}.
\end{equation*}
Since $1 < \gamma_1 \leq p \leq \gamma_2 < \infty$, the matrix $A(x) = \left( a_{ij}(x) \right)$ is bounded and uniformly elliptic.
Hence, $v_0$ is a subsolution to
\begin{equation*}
- \mathrm{div} \left( A(x) Dv_0 \right) \leq 0 \quad \text{in} \ B^{+}_{\frac{1}{2}}.
\end{equation*}
Let $\widetilde{v_0}$ be the even extension of $v_0$ from $B^{+}_{\frac{1}{2}}$ to $B_{\frac{1}{2}}$, and let $\widetilde{a_{ij}}(x)$ be an extension of $a_{ij}(x)$ from $B^{+}_{\frac{1}{2}}$ to $B_{\frac{1}{2}}$ such that
\begin{eqnarray*}\begin{split}
\left\{
\begin{array}{ll}
\widetilde{a_{ij}}(x', -x_n) = a_{ij}(x', x_n) & \mathrm{for} \ \ 1 \leq i < n, 1 \leq j < n, \\
\widetilde{a_{in}}(x', -x_n) = -a_{in}(x', x_n) & \mathrm{for} \ \ 1 \leq i < n, \\
\widetilde{a_{nj}}(x', -x_n) = -a_{nj}(x', x_n) & \mathrm{for} \ \ 1 \leq j < n, \\
\widetilde{a_{nn}}(x', -x_n) = a_{nn}(x', x_n), \\
\end{array}
\right.
\end{split}\end{eqnarray*}
for $x=(x',x_n) \in B^{+}_{\frac{1}{2}}$.
Then we see that $\widetilde{v_0}$ is a non-negative function and that the matrix $\widetilde{A}(x) = \left( \widetilde{a_{ij}}(x) \right)$ is bounded and uniformly elliptic.
Moreover, $\widetilde{v_0}$ is a subsolution to
\begin{equation*}
- \mathrm{div} \left( \widetilde{A}(x) D\widetilde{v_0} \right) \leq 0 \quad \text{in} \ B_{\frac{1}{2}}.
\end{equation*}
We then use Moser iteration technique (see \cite{AF, DSV}) to obtain that $\widetilde{v_0} \in L^{\infty}(B_{\frac{1}{4}})$ with the estimate
\begin{equation*}
\sup_{B_{\frac{1}{4}}} \widetilde{v_0} \leq c \int_{B_{\frac{1}{2}}} \widetilde{v_0} \, dx,
\end{equation*}
and hence
\begin{equation}
\label{aux_lem_pf_2}
\sup_{B^{+}_{\frac{1}{4}}} |D'u_0|^p \leq \sup_{B^{+}_{\frac{1}{4}}} |Du_0|^p \leq c \int_{B^{+}_{\frac{1}{2}}} |Du_0|^p \, dx \leq c \int_{B^{+}_1} |Du_0|^p \, dx.
\end{equation}
Combining (\ref{aux_lem_pf_1}) with (\ref{aux_lem_pf_2}) gives
\begin{equation}
\label{aux_lem_pf_3}
|Du_0(x_1)-Du_0(x_2)| \leq c \left( \int_{B^{+}_1} |Du_0|^p \, dx \right)^{\frac{1}{p}} |x_1-x_2|^{\alpha}
\end{equation}
for all $x_1, x_2 \in B^{+}_{\frac{1}{8}}$.
We then find that for any $x_1 \in B^{+}_{\rho}$ with $0 < \rho < \frac{1}{8}$,
\begin{align*}
|Du_0(x_1)-(Du_0)_{B^{+}_{\rho}}| & = \left| \mint_{B^{+}_{\rho}} \left( Du_0(x_1)-Du_0(x) \right) dx \right| \\
& \leq \mint_{B^{+}_{\rho}} |Du_0(x_1)-Du_0(x)| \, dx \\
& \leq c \left( \int_{B^{+}_1} |Du_0|^p \, dx \right)^{\frac{1}{p}} \mint_{B^{+}_{\rho}} |x_1-x|^{\alpha} \, dx \\
& \leq c \left( \int_{B^{+}_1} |Du_0|^p \, dx \right)^{\frac{1}{p}} \rho^{\alpha}.
\end{align*}
Therefore, we conclude that for any $0 < \rho < \frac{1}{8}$,
\begin{equation*}
\mint_{B^{+}_{\rho}} |Du_0-(Du_0)_{B^{+}_{\rho}}|^p \, dx \leq c \rho^{p\alpha} \mint_{B^{+}_1} |Du_0|^p \, dx \leq c \rho^{\alpha_0} \mint_{B^{+}_1} |Du_0|^p \, dx
\end{equation*}
for some $\alpha_0 = \alpha_0(n,\gamma_1,\gamma_2) \in (0,1)$, where we have used the fact that $1 < \gamma_1 \leq p \leq \gamma_2 < \infty$.
This proves (\ref{aux_lem_r1}).
Furthermore, it follows from (\ref{aux_lem_pf_2}) that for any $0 < \rho < \frac{1}{8}$,
\begin{equation*}
\mint_{B^{+}_{\rho}} |Du_0|^p \, dx \leq \sup_{B^{+}_{\rho}} |Du_0|^p \leq \sup_{B^{+}_{\frac{1}{8}}} |Du_0|^p \leq c \mint_{B^{+}_1} |Du_0|^p \, dx,
\end{equation*}
which yields (\ref{aux_lem_r2}).
\end{proof}

Using the previous lemma and the comparison estimate in Lemma \ref{Comparion_est_lemma}, we have the following lemma.
We recall our assumptions (\ref{condition_of_r_1}), (\ref{condition_of_sigma_0}) and (\ref{condition_of_sigma_1}) on $r$.

\begin{lemma}
For any $\tau \in (0,n)$, there exists a constant $\delta \in (0,1)$ depending on $n, \gamma_1,\gamma_2,[p(\cdot)]_{\beta},\Norm{Dw}_{\infty},\tau$ such that if $r \leq \delta M^{-\frac{4\sigma}{\delta}}$ with  then we have
\begin{equation}
\label{decay_estimate}
\mint_{B^{+}_{\rho}} |Du|^{p_2} \, dx \leq c \rho^{-\tau} \left( \mint_{B^{+}_r} |Du|^{p_2} \, dx + 1 \right),
\end{equation}
whenever $\rho \in (0,r)$, for some constant $c=c(n,\gamma_1,\gamma_2,[p(\cdot)]_{\beta},\Norm{Dw}_{\infty},\tau)>1$.
\end{lemma}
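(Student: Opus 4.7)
The plan is to combine the comparison estimate from Lemma \ref{Comparion_est_lemma} with the interior decay for the constant-exponent thin obstacle minimizer (the preceding lemma) at every sub-scale, and then to iterate via Lemma \ref{technical lemma}. Set $\varphi(s) := \int_{B_s^+} |Du|^{p_2}\,dx$ and keep $p_2 = \sup_{B_r^+} p(\cdot)$ fixed throughout.

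For each $s \in (0, r/2]$, I would let $u_0^{(s)} \in W^{1,p_2}(B_s^+)$ be the minimizer of $\mathcal{F}_{p_2}$ on $B_s^+$ subject to $u_0^{(s)} = u$ on $(\partial B_s)^+$ and $u_0^{(s)} \geq 0$ on $T_s$. The smallness assumptions \eqref{condition_of_r_1}--\eqref{condition_of_sigma_1} are monotone in the radius, and the proof of Lemma \ref{Comparion_est_lemma} uses the variable exponent only through the bound $p_2 - p(x) \leq p_2 - p_1$, so both that lemma and the preceding one apply at scale $s$ with the same fixed $p_2$. Splitting $|Du|^{p_2} \leq c(|Du_0^{(s)}|^{p_2} + |Du - Du_0^{(s)}|^{p_2})$, applying \eqref{aux_lem_r2} (with $r$ replaced by $s$) together with \eqref{DugeDu0} to control $\int_{B_\rho^+} |Du_0^{(s)}|^{p_2}\,dx \leq c(\rho/s)^n \varphi(s)$, and then Lemma \ref{Comparion_est_lemma} to bound $\int_{B_s^+} |Du - Du_0^{(s)}|^{p_2}\,dx$, I obtain for all $0 < \rho < s/8$,
\[
\varphi(\rho) \leq c\Bigl(\tfrac{\rho}{s}\Bigr)^n \varphi(s) + c\, s^{\beta/4} M^{\sigma_1}\, \varphi(2s) + c\, s^{\beta/4 + n}.
\]
Using monotonicity of $\varphi$ to extend this inequality to $\rho \in [s/8, s)$, the trivial bound $\varphi(s) \leq \varphi(2s)$, and $s^{\beta/4 + n} \leq s^{n-\tau}$ (valid because $r \leq 1$), I arrive at
\[
\varphi(\rho) \leq c\Bigl[\Bigl(\tfrac{\rho}{s}\Bigr)^n + s^{\beta/4} M^{\sigma_1}\Bigr]\varphi(2s) + c\, s^{n-\tau}
\]
for every $0 < \rho < s \leq r/2$.

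Now I would apply Lemma \ref{technical lemma} on $[0, r]$ with $\alpha_1 = n$, $\alpha_2 = n - \tau$ (so that $\alpha_1 > \alpha_2$ because $\tau > 0$), $A$ and $B$ equal to the relevant constants, and $\varepsilon = s^{\beta/4} M^{\sigma_1}$. Choosing $\delta$ small enough that a hypothesis of the form $r \leq \delta M^{-4\sigma_1/\beta}$ forces $\varepsilon \leq r^{\beta/4} M^{\sigma_1} \leq \varepsilon_0(n,\tau)$, the technical lemma yields
\[
\varphi(\rho) \leq c\Bigl[\Bigl(\tfrac{\rho}{r}\Bigr)^{n-\tau}\varphi(r) + \rho^{n-\tau}\Bigr], \qquad \rho \in (0, r).
\]
Dividing by $\rho^n$ and using $r \leq 1$ to estimate $\varphi(r)/r^{n-\tau} \leq \varphi(r)/r^n = \mint_{B_r^+} |Du|^{p_2}\,dx$ delivers \eqref{decay_estimate}.

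The principal subtlety is the forced choice $\alpha_2 = n - \tau$ strictly less than $\alpha_1 = n$: the one-step error $s^{\beta/4}(M^{\sigma_1}\varphi(2s) + s^n)$ is only Hölder-small in $s$ rather than summable dyadically, so the iteration cannot preserve the clean $n$-rate and produces the $\rho^{-\tau}$ loss. The smallness condition on $r$ in terms of $M$ arises purely from matching the $\varepsilon$-threshold of Lemma \ref{technical lemma} at the outermost scale; the dependence of $\delta$ on $\|Dw\|_\infty$ is inherited from the comparison constant in Lemma \ref{Comparion_est_lemma}.
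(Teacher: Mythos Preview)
Your proposal is correct and follows essentially the same approach as the paper: split $|Du|^{p_2}$ into a comparison term and a $|Du_0|^{p_2}$ term, control them via Lemma~\ref{Comparion_est_lemma} and \eqref{aux_lem_r2} respectively (together with \eqref{DugeDu0}), and iterate using Lemma~\ref{technical lemma} with $\alpha_1=n$, $\alpha_2=n-\tau$, and $\varepsilon$ bounded by $r^{\beta/4}M^{\sigma_1}$. You are simply more explicit than the paper in introducing the scale-dependent comparison minimizers $u_0^{(s)}$ and in extending the range $\rho\in(0,s/8)$ to $\rho\in(0,s)$ by monotonicity; the paper leaves both of these points implicit but the argument is otherwise identical.
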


\begin{proof}
By \eqref{DugeDu0}, \eqref{Du-Du_0 p_2} and \eqref{aux_lem_r2}, we have
\begin{align*}
\int_{B^{+}_{\rho}} |Du|^{p_2} \, dx &\le c \left( \int_{B^{+}_\rho} |Du-Du_0|^{p_2} \, dx+\int_{B^{+}_\rho} |Du_0|^{p_2} \, dx  \right)\\
&\le cr^{\frac{\beta}{4}} \left( M^{\sigma_1} \int_{B^{+}_{2r}} |Du|^{p_2} \, dx + r^n \right)+\left( \frac{\rho}{r} \right)^n\int_{B^{+}_r} |Du_0|^{p_2} \, dx\\
&\le c_1 \left\{ r^{\frac{\beta}{4}}  M^{\sigma_1}+\left( \frac{\rho}{r} \right)^n \right\}\int_{B^{+}_{2r}} |Du|^{p_2} \, dx+ c_2 r^{n-\tau},
\end{align*}
for some constants $c_1,c_2>1$ depending only on $n,\gamma_1,\gamma_2,[p(\cdot)]_{\beta}$ and $\Norm{Dw}_{\infty}$.
We set $\varphi(s) = \int_{B^{+}_{s}} |Du|^{p_2} \, dx$.
Let $\varepsilon_0$ be the positive number given in Lemma \ref{technical lemma} with $(A,B, \alpha_1, \alpha_2)=(c_1,c_2,n,n-\tau)$.
We then take $\delta>0$ such that
$$ r^{\frac{\beta}{4}}  M^{\sigma_1}\le \delta^{\frac{\beta}{4}} \le \varepsilon_0.$$
Applying Lemma \ref{technical lemma}, we conclude that
\begin{equation*}
\int_{B^{+}_{\rho}} |Du|^{p_2} \, dx \le c \left(\frac{\rho}{r}\right)^{n-\tau}\int_{B^{+}_{r}} |Du|^{p_2} \, dx+c\rho^{n-\tau}
\end{equation*}
whenever $\rho \in (0,r)$, and hence
\begin{equation*}
\mint_{B^{+}_{\rho}} |Du|^{p_2} \, dx \leq c\rho^{-\tau}\left( r^{\tau} \mint_{B^{+}_{r}} |Du|^{p_2} \, dx+1 \right) \leq c\rho^{-\tau}\left(\mint_{B^{+}_{r}} |Du|^{p_2} \, dx+1 \right),
\end{equation*}
which is the desired inequality.
\end{proof}

We finally state and prove the main theorem.

\begin{theorem}
[H\"{o}lder continuity of the gradient] \label{MainThm_2}
Let $u \in W^{1,p(\cdot)}(B^{+}_1)$ be a minimizer of the functional $\mathcal{F}_{p(\cdot)}$ over the admissible set $\mathcal{A}$.
Then there exists $\alpha = \alpha(n,\beta,\gamma_1,\gamma_2) \in (0,1)$ such that $u \in C^{1,\alpha}(B^{+}_{\frac{1}{2}})$.
\end{theorem}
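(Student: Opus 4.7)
The strategy is to prove a Campanato-type decay for $Du$ at points of the thin boundary $T_{1/2}$ via the freezing comparison in Lemma \ref{Comparion_est_lemma}, combine it with interior $p(x)$-Laplace regularity, and conclude through the Campanato embedding (Lemma \ref{Campanato_embed}).

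Fix $x_0 \in T_{1/2}$ and a radius $r$ small enough that \eqref{condition_of_r_1}, \eqref{condition_of_sigma_0} and \eqref{condition_of_sigma_1} all hold simultaneously. For $\rho \in (0, r/8)$ I split, by the triangle inequality and the $L^{p_2}$-minimality of the average up to a constant factor,
$$\int_{B_\rho^+(x_0)} |Du - (Du)_{B_\rho^+(x_0)}|^{p_2}\,dx \le c\int_{B_\rho^+(x_0)} |Du_0 - (Du_0)_{B_\rho^+(x_0)}|^{p_2}\,dx + c\int_{B_r^+(x_0)} |Du - Du_0|^{p_2}\,dx,$$
where $u_0$ is the frozen $p_2$-minimizer. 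For the first integral I invoke the excess decay \eqref{aux_lem_r1}, followed by \eqref{DugeDu0} to replace $|Du_0|^{p_2}$ by $|Du|^{p_2}$; for the second I apply the comparison estimate \eqref{Du-Du_0 p_2}. Next I use the Morrey-type decay \eqref{decay_estimate} with $\tau \in (0, \alpha_0\beta/(8(n+\alpha_0)))$ to obtain $\int_{B_r^+(x_0)} |Du|^{p_2}\,dx \le c\,r^{n-\tau}$ for $r$ small, yielding the master estimate
$$\int_{B_\rho^+(x_0)} |Du - (Du)_{B_\rho^+(x_0)}|^{p_2}\,dx \le c(\rho/r)^{n+\alpha_0}\, r^{n-\tau} + c\,r^{n+\beta/4-\tau}.$$

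I would then balance $\rho$ and $r$ by choosing $r = \rho^{s}$ with $s = (n+\gamma)/(n+\beta/4-\tau)$ for a small $\gamma>0$ satisfying $\tau(n+\alpha_0)+\gamma(n+\alpha_0+\beta/4) \le \alpha_0\beta/4$; this ensures both terms on the right-hand side are bounded by $c\,\rho^{n+\gamma}$. The outcome is the Campanato decay
$$\int_{B_\rho^+(x_0)}|Du - (Du)_{B_\rho^+(x_0)}|^{p_2}\,dx \le c\,\rho^{n+p_2\alpha}$$
for some $\alpha = \alpha(n,\beta,\gamma_1,\gamma_2) > 0$ and all sufficiently small $\rho$, uniformly in $x_0 \in T_{1/2}$. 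Applying Jensen's inequality with the fixed exponent $\gamma_1 \le p_2$ converts this into the scale-invariant Campanato condition $\int_{B_\rho^+(x_0)}|Du - (Du)_{B_\rho^+(x_0)}|^{\gamma_1}\,dx \le c\,\rho^{n+\gamma_1\alpha}$ with a uniform exponent.

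For an interior point $y \in B_{1/2}^+$ with $\mathrm{dist}(y,T_1) \ge 2\rho$, the minimizer $u$ satisfies the homogeneous $p(x)$-Laplace equation on $B_\rho(y)$, so the interior $C^{1,\alpha}$-result of Acerbi--Coscia--Mingione \cite{AMin, CM1} supplies the same decay. When $y$ lies closer to $T_1$, $B_\rho(y)\cap B_{1/2}^+$ is contained in a half-ball of radius $\le 3\rho$ centered at the projection of $y$ onto $T_{1/2}$, reducing to the boundary case. Hence the Campanato condition holds on $B_{1/2}^+$ with exponent $\lambda = n + \gamma_1\alpha > n$, and Lemma \ref{Campanato_embed} yields $Du \in C^{0,\alpha}(\overline{B_{1/2}^+})$, proving the theorem. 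The crux of the argument is the balancing step: the power-gain $r^{\beta/4}$ from the comparison estimate \eqref{Du-Du_0 p_2} together with the excess decay $\alpha_0$ from the frozen thin obstacle problem must compensate for the Morrey loss $r^{-\tau}$ in \eqref{decay_estimate}. Since $\beta$ and $\alpha_0$ are fixed positive structural constants, the threshold $\tau < \alpha_0\beta/(8(n+\alpha_0))$ is attainable and leaves a genuine positive Campanato exponent, though bookkeeping the cumulative smallness requirements on $r$ is somewhat delicate.
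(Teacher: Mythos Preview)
Your proposal is correct and follows essentially the same route as the paper: split the Campanato excess into a frozen-problem oscillation (controlled by \eqref{aux_lem_r1} and \eqref{DugeDu0}) and a comparison term (controlled by \eqref{Du-Du_0 p_2}), feed both through the Morrey decay \eqref{decay_estimate}, balance the scales, and conclude via Lemma~\ref{Campanato_embed}. The paper parameterizes the balancing as $\rho = \tfrac{1}{8}r^{1+\beta_0/(n+\alpha_0)}$ with the specific choice $\tau = \alpha_0\beta_0/(2(n+\alpha_0))$, while you write $r=\rho^s$ and impose the inequality $\tau(n+\alpha_0)+\gamma(n+\alpha_0+\beta/4)\le \alpha_0\beta/4$; these are equivalent bookkeeping choices. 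One point you gloss over that the paper handles explicitly: since $p_2=\sup_{B_r^+}p(\cdot)$ varies with $r$, the decay estimate \eqref{decay_estimate} must be applied at a fixed outer scale $r_0$ with the fixed exponent $p_+=\sup_{B_{r_0}^+}p(\cdot)\ge p_2$, and the $M^{\sigma_1}$ factor from \eqref{Du-Du_0 p_2} is absorbed by the extra smallness assumption $r_0^{\beta/8}M^{\sigma_1}\le 1$; you should make this step explicit when you write the full argument.
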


\begin{proof}
Since $u \in W^{1,p(\cdot)}(B^{+}_1)$ satisfies the Euler-Lagrange equation
\begin{equation*}
- \mathrm{div} \left( |Du|^{p(x)-2} Du \right) = 0 \quad \text{in} \ B_r(x_0) \Subset B^{+}_1,
\end{equation*}
the gradient of $u$ is H\"{o}lder continuous in the interior of $B^{+}_1$ (see for instance \cite{AMin}).
Hence we only focus on the boundary case $x_0 \in T_{\frac{1}{2}}$.
Let $r_0$ satisfy \eqref{condition_of_r_1}, \eqref{condition_of_sigma_0} and \eqref{condition_of_sigma_1} for $r=r_0$. We also assume that
$$r_0^{\frac{\beta}{8}}M^{\sigma_1}\le 1,$$
where $\sigma_1$ and $M$ are given as (\ref{condition_of_sigma_1}) and (\ref{def_of_M}), respectively.
Suppose $0< \rho < \frac{r}{8} < \frac{r_0}{16}$ and set
$$p_+ := \sup_{B^{+}_{r_0}} p(\cdot), \quad p_- := \inf_{B^{+}_{r_0}} p(\cdot) \quad \text{ and } \quad p_2 := \sup_{B^{+}_{r}} p(\cdot).$$
It follows from the triangle inequality and H\"{o}lder's inequality that
\begin{align*}
\mint_{B^{+}_{\rho}} |Du-(Du)_{B^{+}_{\rho}}|^{p_2} \, dx & \leq c \mint_{B^{+}_{\rho}} |Du-(Du_0)_{B^{+}_{\rho}}|^{p_2} \, dx + c |(Du_0)_{B^{+}_{\rho}}-(Du)_{B^{+}_{\rho}}|^{p_2} \\
& = c \mint_{B^{+}_{\rho}} |Du-(Du_0)_{B^{+}_{\rho}}|^{p_2} \, dx + c \left| \mint_{B^{+}_{\rho}} \left( Du \, - (Du_0)_{B^{+}_{\rho}} \right) dx \right|^{p_2} \\
& \leq c \mint_{B^{+}_{\rho}} |Du-(Du_0)_{B^{+}_{\rho}}|^{p_2} \, dx \\
& \leq c \mint_{B^{+}_{\rho}} |Du-Du_0|^{p_2} \, dx + c \mint_{B^{+}_{\rho}} |Du_0-(Du_0)_{B^{+}_{\rho}}|^{p_2} \, dx,
\end{align*}
and hence
\begin{align*}
\int_{B^{+}_{\rho}} |Du-(Du)_{B^{+}_{\rho}}|^{p_2} \, dx & \leq c \int_{B^{+}_{\rho}} |Du-Du_0|^{p_2} \, dx + c \int_{B^{+}_{\rho}} |Du_0-(Du_0)_{B^{+}_{\rho}}|^{p_2} \, dx \\
& =: \RN{1} + \RN{2}.
\end{align*}
By (\ref{Du-Du_0 p_2}), we estimate $\RN{1}$ as follows:
\begin{align*}
\RN{1} \leq c \int_{B^{+}_r} |Du-Du_0|^{p_2} \, dx & \leq cr^{\frac{\beta}{4} + n} \left( M^{\sigma_1} \mint_{B^{+}_{2r}} |Du|^{p_2} \, dx + 1 \right) \\
& \leq cr^{\frac{\beta}{4} + n} \left( \mint_{B^{+}_{2r}} |Du|^{p_+} \, dx + 1 \right) \\
& \leq cr^{\frac{\beta}{4} + n - \tau} \left( \mint_{B^{+}_{r_0}} |Du|^{p_+} \, dx + 1 \right) \\
\end{align*}
for any $\tau \in (0,n)$, where we have used (\ref{decay_estimate}) with $(\rho, r, p_2)$ replaced by $(2r,r_0,p_+)$ for the last inequality.
To estimate $\RN{2}$, we apply (\ref{aux_lem_r1}) with $p=p_2$, (\ref{DugeDu0}) and (\ref{decay_estimate}) with $(\rho, r, p_2)$ replaced by $(r,r_0,p_+)$.
Then
\begin{align*}
\RN{2} \leq c \rho^n \left( \frac{\rho}{r} \right)^{\alpha_0} \mint_{B^{+}_r} |Du_0|^{p_2} \, dx & \leq c \rho^n \left( \frac{\rho}{r} \right)^{\alpha_0} \mint_{B^{+}_r} |Du|^{p_2} \, dx \\
& \leq c \rho^n \left( \frac{\rho}{r} \right)^{\alpha_0} \left( \mint_{B^{+}_r} |Du|^{p_+} \, dx + 1 \right) \\
& \leq c \rho^n \left( \frac{\rho}{r} \right)^{\alpha_0} r^{-\tau} \left( \mint_{B^{+}_{r_0}} |Du|^{p_+} \, dx + 1 \right)
\end{align*}
for any $\tau \in (0,n)$.
Combining these estimates, we have
\begin{equation*}
\int_{B^{+}_{\rho}} |Du-(Du)_{B^{+}_{\rho}}|^{p_2} \, dx \leq c \left( r^{\beta_0 + n - \tau} + \rho^{n+\alpha_0} r^{-\alpha_0 - \tau} \right) \left( \mint_{B^{+}_{r_0}} |Du|^{p_+} \, dx + 1 \right),
\end{equation*}
where $\beta_0 := \frac{\beta}{4}$.
We now choose $\rho=\frac{1}{8} r^{1+\frac{\beta_0}{n+\alpha_0}}$ to discover that
\begin{equation*}
\int_{B^{+}_{\rho}} |Du-(Du)_{B^{+}_{\rho}}|^{p_2} \, dx \leq c \rho^{\frac{(n+\alpha_0)(n + \beta_0 - \tau)}{n + \alpha_0 + \beta_0}} \left( \mint_{B^{+}_{r_0}} |Du|^{p_+} \, dx + 1 \right).
\end{equation*}
Finally, we select $\tau = \frac{\alpha_0 \beta_0}{2(n+\alpha_0)} \in (0,n)$, thereby deciding $\delta \in (0,1)$ and $r_0 \in (0,\frac{1}{8})$, to obtain
\begin{equation*}
\int_{B^{+}_{\rho}} |Du-(Du)_{B^{+}_{\rho}}|^{p_2} \, dx \leq c \rho^{n+\frac{\alpha_0 \beta_0}{2(n+\alpha_0+\beta_0)}} \left( \mint_{B^{+}_{r_0}} |Du|^{p_+} \, dx + 1 \right).
\end{equation*}
This yields
\begin{align*}
\left( \mint_{B^{+}_{\rho}} |Du-(Du)_{B^{+}_{\rho}}|^{\gamma_1} \, dx \right)^{\frac{1}{\gamma_1}} & \leq \left( \mint_{B^{+}_{\rho}} |Du-(Du)_{B^{+}_{\rho}}|^{p_2} \, dx \right)^{\frac{1}{p_2}} \\
& \leq c \rho^{\frac{\alpha_0 \beta_0}{2(n+\alpha_0+\beta_0)p_2}} \left( \mint_{B^{+}_{r_0}} |Du|^{p_+} \, dx + 1 \right)^{\frac{1}{p_2}} \\
& \leq c \rho^{\frac{\alpha_0 \beta_0}{2(n+\alpha_0+\beta_0)\gamma_2}} \left( \mint_{B^{+}_{r_0}} |Du|^{p_+} \, dx + 1 \right)^{\frac{1}{p_-}}
\end{align*}
for any $B^{+}_{\rho} = B^{+}_{\rho}(y) \subset B^{+}_{\widetilde{r_0}}(x_0)$, where $\widetilde{r_0} := \left( \frac{r_0}{16} \right)^{1+\frac{\beta_0}{n+\alpha_0}}$.
Then we conclude from Lemma \ref{Campanato_embed} that $Du \in C^{0,\alpha}(B^{+}_{\widetilde{r_0}}(x_0); \mr^n)$ with $\alpha=\frac{\alpha_0 \beta_0}{2(n+\alpha_0+\beta_0)\gamma_2}>0$.
\end{proof}

\bibliographystyle{amsplain}

\end{document}